\newtheorem{Definition}{Definition}[section]
\newtheorem{Theorem}[Definition]{Theorem}
\newtheorem{Lemma}[Definition]{Lemma}
\newtheorem{Corollary}[Definition]{Corollary}
\newcommand{\lc}{\mathcal{L}}
\newcommand{\rc}{\mathcal{R}}
\newcommand{\hc}{\mathcal{H}}
\newcommand{\jc}{\mathcal{J}}
\title{\Large \bf Nil-extensions of simple and right $\pi$-inverse ordered semigroups}
\author{A. Jamadar \\
\footnotesize{Department of Mathematics, Rampurhat College}\\
\footnotesize{Rampurhat-731224, West Bengal, India}\\
\footnotesize{amlanjamadar@gmail.com}}
\begin{document}

\date{}
\maketitle

\begin{abstract}{\footnotesize}
An ordered semigroup $S$ is right $\pi$-inverse if it is
$\pi$-inverse but not conversely. So the question arises under
what condition the converse holds. In this paper we study
nil-extensions of simple and right $\pi$-inverse ordered
semigroups and prove that $S$ is right $\pi$-inverse if and only
if $S$ is $\pi$-inverse in a $t$-Archimedean ordered semigroup.
Moreover, we characterize complete semilattice of nil-extensions
of simple and right $\pi$-inverse ordered semigroups.

 \end{abstract}
{\it Key Words and phrases:} {$l$-Archimedean, $t$-Archimedean,
$\pi$-regular, nil-extension, ordered idempotent, simple ordered
semigroup, right $\pi$-inverse ordered semigroup}.
\\{\it 2010 Mathematics subject Classification:} 20M10; 06F05.

\section{Introduction and Preliminaries}
An ordered semigroup  is a partiality ordered set $(S,\leq)$ and
at the same time a semigroup $(S,\cdot)$ such that for all $a, \;b
, \;x \in S ,$ $a \leq b \;\textrm{implies} \;xa\leq xb
\;\textrm{and} \;a x \leq b x $. It is denoted by $(S,\cdot,
\leq)$. For a subset $A$ of $S$, let $(A]= \{x\in S: x\leq a,
\;\textrm{for some} \;a\in A\}$.

A nonempty subset $A$ of $S$ is  called a left (right) ideal
$\cite{Ke2006}$ of $S$, if $SA \subseteq A \;( A S \subseteq A)$
and $(A]= A$. A nonempty subset $A$ is called a (two-sided) ideal
of $S$ if it is both a left and a right ideal of $S$. An left
(right) ideal $I$ of $S$ is proper if $I \neq S$. $S$ is
left(right) simple if it does not contain proper left (right)
ideals. An ordered semigroup $S$ is called simple if for every
ideal $I$ of $S$, we have $I= S$. $S$ is called t-simple if it is
both left and right simple.

The principal $\cite{Ke2006}$ left ideal, right ideal, ideal and
bi-ideal generated by $a \in S$ are denoted by $L(a), \;R(a),
\;I(a)$and $B(a)$ respectively and defined by
$$L(a)= (a \cup Sa], \;R(a)= (a\cup aS], \;I(a)= (a\cup Sa \cup aS
\cup SaS] \;  and \;B(a)=(a \cup aSa].$$

Kehayopulu \cite{Ke2006} defined Greens relations $\lc, \;\rc,
\;\jc \;\textrm{and} \;\hc$ on an ordered semigroup $S$ as
follows:

$$ a \lc b   \; if   \;L(a)= L(b), \; a \rc b   \; if   \;R(a)=
R(b),\;
 a \jc b   \; if   \;I(a)= I(b) \;\textrm{and} \;\hc= \;\lc \cap \;\rc.$$

These four relations  are equivalence relations on $S$.

An element $a$ of $S$ is said to be regular if $a\in (aSa]$. We
denote set of regular elements by $Reg_{\leq}(S)$. An element $b
\in S$ is ordered inverse $\cite{HJ}$ of $a$ if $a \leq aba$ and
$b \leq bab$.  The set of all ordered inverses of an element $a
\in S$ is denoted by $V_\leq(a)$. An element $e\in S$ is said to
be ordered idempotent if $e\leq e^2$. The set of all ordered
idempotents of $S$ is denoted by $E_\leq (S)$.

An ordered semigroup $S$ is called Archimedean \cite{Cao 2000} if
for every $a, b \in S$ there is $m \in \mathbb{N}$ such that $b^m
\in (SaS]$. $S$ is called $r (l \;or \;t)$-Archimedean \cite{Cao
2000} if for every $a, b \in S$, there exists $m \in \mathbb{N}$
such that $b^m \in (aS] \;(b^m \in (Sa] \;\textrm{or} \;b^m \in
(aSa])$.

An ordered semigroup $S$ is called $\pi$-regular (resp. intra
$\pi$-regular) $\cite{Cao 2000}$ if for every $ a\in S$ there is
$m \in \mathbb{N}$ such that $a^{m} \in (a^{m}Sa^{m}]$ (resp. $a^m
\in (S a^{2m} S]$). We denote set of all $\pi$-regular and intra
$\pi$-regular elements by $\pi Reg_{\leq}(S)$ and $\Pi
Intra_\leq(S)$ respectively. A $\pi$-regular ordered semigroup $S$
is called right $\pi$-inverse \cite{AJ1} if for every $a\in S$,
there is $m \in \mathbb{N}$ such that any two inverses of $a^m$
are $\rc$-related.

Nil-extensions of an ordered semigroup $S$ with zero $0$ are
precisely the ideal extensions of an ideal $I$ of $S$ by the
nilpotent ordered semigroup $S/I$ \cite{ke2003}. Following Jamadar
\cite{AJ2}, a right $\pi$-inverse ordered semigroup is a
$\pi$-regular semigroup with the property that each element $a\in
S$ there exists $m\in \mathbb{N}$ such that any two inverses of
$a^m$ are $\rc$-related. These ordered semigroups are natural
generalization of  $\pi$-inverse ordered semigroups. Hansda and
Jamadar \cite{HJ1} studied ordered semigroups which are
nil-extensions of both simple and $\pi$-inverse ordered
semigroups. So it is a logical step to study ordered semigroups
which are nil-extensions of both simple and right $\pi$-inverse
ordered semigroups. Aim of this work is to describe nil-extensions
of simple and right $\pi$-inverse ordered semigroups. Furthermore
complete semilattice decompositions of the nil-extensions of
simple and right $\pi$-inverse ordered semigroups have been given
here. This paper is inspired by \cite{HB} and \cite{HJ1}.

 A congruence $\rho$ on $S$ is called semilattice if for all $a, b
\in S \;a \;\rho \;a^{2} \;\textrm{and} \;ab \;\rho \;ba$. A
semilattice congruence $\rho$ on $S$ is called complete if $a \leq
b$ implies $(a,ab)\in \rho$. The ordered semigroup $S$  is called
complete semilattice of subsemigroup of type $\tau$ if there exists
a complete semilattice congruence $\rho $ such that $(x)_{\rho}$ is
a type $\tau$ subsemigroup of $S$. Equivalently: There exists a
semilattice $Y$ and a family of subsemigroups $\{S\}_{\alpha \in Y}$
of type $\tau$ of $S$ such that:
\begin{enumerate}
\item \vspace{-.4cm} $S_{\alpha}\cap S_{\beta}= \;\phi$ for any
$\alpha, \;\beta \in \;Y \;with \; \alpha \neq \beta,$ \item
\vspace{-.4cm} $S=\bigcup _{\alpha \;\in \;Y} \;S_{\alpha},$ \item
\vspace{-.4cm} $S_{\alpha}S_{\beta} \;\subseteq \;S_{\alpha
\;\beta}$ for any $\alpha, \;\beta \in \;Y,$ \item \vspace{-.4cm}
$S_{\beta}\cap (S_{\alpha}]\neq \phi$ implies $\beta \;\preceq
\;\alpha,$ where $\preceq$ is the order of the semilattice $Y$
defined by \\$\preceq:=\{(\alpha,\;\beta)\;\mid
\;\alpha=\alpha\;\beta\;(\beta\;\alpha)\}$ $\cite{ke 2008}$.
\end{enumerate}

Let $S$ be a $\pi$-regular ordered semigroup. Due to Sadhya and
Hansda \cite{SH1}, the following equivalence relations $\lc^*$,
$\rc^*$, $\jc^*$ and $\hc^*$ are given by:

\begin{enumerate}
\item\vspace{-.4cm}
 $a\lc^* b\Leftrightarrow a^m\lc b^n$
\item\vspace{-.4cm} $a\rc^* b\Leftrightarrow a^m\rc b^n$
\item\vspace{-.4cm} $a\jc^* b\Leftrightarrow a^m\jc b^n$
\item\vspace{-.4cm} $\hc^*= \lc^*\cap \rc^*$
\end{enumerate}

where $a,b\in S$ and $m, n$ are the smallest positive integers
such that $a^m, b^n\in Reg_\leq (S)$.

For  $a,b \in S$, $a|b$ if and only if there exists $x, y \in S^1$
such that $b \leq xay$.

\begin{Theorem}\cite{AJ2}\label{500}
A $\pi$-regular ordered semigroup $S$ is a right $\pi$-inverse if
and only if for any two idempotents $e,f\in E_\leq(S)$, $e\lc^* f$
implies $e\rc^* f$.
\end{Theorem}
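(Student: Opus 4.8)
The plan is to split into the two implications after one preliminary reduction. First observe that every ordered idempotent is regular: from $e\leq e^{2}$ we get $e\leq e^{2}\leq e^{3}=e\cdot e\cdot e\in eSe$, so $e\in(eSe]$. Hence for $e\in E_\leq(S)$ the least power lying in $Reg_\leq(S)$ is $e$ itself, and consequently $e\,\lc^{*}\,f\Leftrightarrow e\,\lc\,f$ and $e\,\rc^{*}\,f\Leftrightarrow e\,\rc\,f$ for all $e,f\in E_\leq(S)$. So the statement to be proved becomes: the $\pi$-regular ordered semigroup $S$ is right $\pi$-inverse if and only if $e\,\lc\,f$ implies $e\,\rc\,f$ for all $e,f\in E_\leq(S)$. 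I will use two routine facts, each obtained by the standard ``two inclusions'' argument with the operator $(\,\cdot\,]$ and the defining inequalities $a\leq aba$, $b\leq bab$ of an ordered inverse: \emph{(i)} if $a\in Reg_\leq(S)$ and $b\in V_\leq(a)$, then $ab,\,ba\in E_\leq(S)$ with $a\,\lc\,ba$, $a\,\rc\,ab$, $b\,\lc\,ab$ and $b\,\rc\,ba$; \emph{(ii)} if $e\in E_\leq(S)$ and $b\in V_\leq(e)$, then $b\in V_\leq(e^{m})$ for every $m\in\mathbb{N}$, since $e^{m}\leq e^{m}be^{m}$ and $b\leq be^{m}b$ follow from $e\leq ebe$, $b\leq beb$, $e\leq e^{m}$ and suitable one-sided multiplications.

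\emph{The ``if'' direction.} Assume $e\,\lc\,f\Rightarrow e\,\rc\,f$ on $E_\leq(S)$. Fix $a\in S$; by $\pi$-regularity there is $m$ with $b:=a^{m}\in Reg_\leq(S)$, and then $V_\leq(b)\neq\emptyset$. Let $x,y\in V_\leq(b)$. By (i), $xb$ and $yb$ are ordered idempotents with $b\,\lc\,xb$ and $b\,\lc\,yb$, hence $xb\,\lc\,yb$; the hypothesis gives $xb\,\rc\,yb$. Again by (i), $x\,\rc\,xb$ and $y\,\rc\,yb$. Chaining these, $x\,\rc\,xb\,\rc\,yb\,\rc\,y$, so $x\,\rc\,y$. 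As $a\in S$ and $x,y\in V_\leq(a^{m})$ were arbitrary, $S$ is right $\pi$-inverse. I expect this direction to go through without difficulty.

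\emph{The ``only if'' direction.} Assume $S$ is right $\pi$-inverse and let $e,f\in E_\leq(S)$ with $e\,\lc\,f$; write $e\leq xf$ and $f\leq ye$ with $x,y\in S^{1}$. The idea is to exhibit $e$ and $f$ as ordered inverses of a single element whose powers still have them as inverses, the obvious candidate being $e$ itself. So I would first prove the key claim that $f\in V_\leq(e)$, i.e.\ $e\leq efe$ and $f\leq fef$ (equivalently, $e$ and $f$ are mutually ordered inverses); of course $e\in V_\leq(e)$ too. Granting the claim, fact (ii) gives $e,f\in V_\leq(e^{m})$ for all $m$, and applying the defining property of a right $\pi$-inverse ordered semigroup to the element $e$ yields an $m$ such that any two ordered inverses of $e^{m}$ are $\rc$-related; in particular $e\,\rc\,f$, that is $e\,\rc^{*}\,f$, as wanted.

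\emph{The main obstacle} is precisely this key claim. In the unordered theory it is trivial, because $e\,\lc\,f$ forces $ef=e$ and $fe=f$, whence $efe=e$ and $fef=f$; here one has only the one-sided inequalities $e\leq xf$, $f\leq ye$ and must recover the two-sided ones. The route I would follow: right-multiply $e\leq xf$ by $e$ and use $e\leq e^{2}$ to get $e\leq xfe$, then left-multiply by $e$ to get $e\leq exfe=e(xf)e$; next use $xf\in(Sf]\subseteq L(f)=L(e)$ to bound $xf$ above by a member of $\{e\}\cup Se$ and feed this back in, iterating so as to replace the auxiliary factors $x$ (and symmetrically $y$) by $e$ and $f$, calling on $e\leq e^{2}$ and $f\leq f^{2}$ at each stage; the inequality $f\leq fef$ comes from $f\leq ye$ by the mirror argument. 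The genuinely delicate point is to organise this substitution bookkeeping — tracking which side one multiplies on and which downward closures are in play — so that it terminates in $e\leq efe$ and $f\leq fef$. Once that is in hand, the theorem follows from (i), (ii) and the definition exactly as sketched above.
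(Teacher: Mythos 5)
First, note that the paper does not actually prove this statement: it is imported verbatim from the unpublished reference \cite{AJ2} (``Communicated''), so there is no in-paper proof to compare your argument against; what follows is an assessment of your attempt on its own terms. Your preliminary reduction (ordered idempotents are regular, so $\mathcal{L}^{*}$ and $\mathcal{R}^{*}$ collapse to $\mathcal{L}$ and $\mathcal{R}$ on $E_\leq(S)$), your facts (i) and (ii), and your entire ``if'' direction are correct and complete: the chain $x\,\mathcal{R}\,xb\,\mathcal{R}\,yb\,\mathcal{R}\,y$ via the idempotents $xb,yb$ is exactly the standard argument and goes through.

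The ``only if'' direction, however, has a genuine gap, and it is not merely a matter of organising the ``substitution bookkeeping'': the key claim you need --- that $e\,\mathcal{L}\,f$ for ordered idempotents forces $e\leq efe$ and $f\leq fef$ --- is \emph{not} a consequence of the data you propose to derive it from, namely $e\leq e^{2}$, $f\leq f^{2}$, $e\leq xf$, $f\leq ye$ and the set equality $L(e)=L(f)$. To see this, take the free semigroup on $\{e,f\}$ with the smallest compatible order containing $e\leq e^{2}$, $f\leq f^{2}$, $e\leq f^{2}$, $f\leq e^{2}$. Here each generating inequality replaces one letter of a word by a two-letter block, so $u\leq v$ with $u\neq v$ forces $|u|<|v|$ (the order is a genuine partial order), and the elements above $f$ of length $3$ are exactly $fff$, $eef$, $fee$, $eee$, $ffe$, $eff$: every word obtained by such a substitution contains an adjacent repeated letter, so the alternating words $fef$ and $efe$ never appear. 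Thus $e,f\in E_\leq(S)$, $L(e)=L(f)$ (since $e\leq f\cdot f\in Sf$ and $f\leq e\cdot e\in Se$, and these propagate to the full downward closures), yet $f\not\leq fef$ and $e\not\leq efe$, so $f\notin V_\leq(e)$. In this model your substitution step is available ($xf=ff\leq fee=(fe)e\in Se$, etc.) and can be iterated indefinitely, but it can never terminate in $e\leq efe$, because that inequality simply does not hold.

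Consequently the route you sketch for the hard direction cannot be repaired by more careful bookkeeping: any proof of the key claim would have to invoke the global hypotheses ($\pi$-regularity, or right $\pi$-inverseness itself) rather than just the two-sided $\mathcal{L}$-relation between $e$ and $f$, and your sketch does not do so. (The example above is regular at $e$ and $f$, so even restricting attention to regular idempotents does not save the claim.) A workable strategy for this half must instead manufacture two ordered inverses of a \emph{single} suitable element that are $\mathcal{R}$-related to $e$ and to $f$ respectively --- e.g.\ by passing to a regular power of a product such as $fe$ or $ef$ and using fact (i) on its inverses --- rather than trying to exhibit $e$ and $f$ themselves as mutual ordered inverses. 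As it stands, the ``only if'' implication is unproved.
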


\begin{Theorem}\cite{AJ2}\label{15}
The following conditions are equivalent on a $\pi$-regular ordered
semigroup $S$.
\begin{enumerate}
\item\vspace{-.4cm} $S$ is right $\pi$-inverse;
\item\vspace{-.4cm} for $a\in S$ there is $m \in \mathbb{N}$ such
that $a', a'' \in V_{\leq}(a^m)$ \textrm{ implies}  $a'\rc a''$.
\end{enumerate}
\end{Theorem}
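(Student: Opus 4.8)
\noindent\emph{Proof (outline of the intended argument).} The plan is to establish the two implications separately, moving in each direction between ordered inverses of a regular power of an element and ordered idempotents, with Theorem~\ref{500} serving as the bridge. Throughout one uses that every ordered idempotent $e$ is regular --- from $e\leq e^{2}$ one gets $e\leq e^{3}\in(eSe]$ --- so that on $E_\leq(S)$ the relations $\lc^{*}$ and $\rc^{*}$ coincide with $\lc$ and $\rc$ respectively.

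\noindent\emph{$(1)\Rightarrow(2)$.} Fix $a\in S$ and, by $\pi$-regularity, choose $m$ with $a^{m}\in(a^{m}Sa^{m}]$, so that $V_\leq(a^{m})\neq\emptyset$. Given $a',a''\in V_\leq(a^{m})$, put $e=a'a^{m}$ and $f=a''a^{m}$. Multiplying $a'\leq a'a^{m}a'$ on the right by $a^{m}$ gives $e\leq e^{2}$, so $e,f\in E_\leq(S)$. I would then record the linking relations: $a'\,\rc\,e$, because $e=a'a^{m}\in(a'S]$ and $a'\leq a'a^{m}a'=ea'\in(eS]$; and $e\,\lc\,a^{m}$, because $e\in(Sa^{m}]$ and $a^{m}\leq a^{m}a'a^{m}=a^{m}e\in(Se]$. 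The same computations give $a''\,\rc\,f$ and $f\,\lc\,a^{m}$, hence $e\,\lc\,f$ and therefore $e\,\lc^{*}\,f$. Since $S$ is right $\pi$-inverse, Theorem~\ref{500} yields $e\,\rc^{*}\,f$, i.e.\ $e\,\rc\,f$, and so $a'\,\rc\,e\,\rc\,f\,\rc\,a''$, which is $(2)$.

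\noindent\emph{$(2)\Rightarrow(1)$.} By Theorem~\ref{500} it is enough to show that for $e,f\in E_\leq(S)$, $e\,\lc^{*}\,f$ implies $e\,\rc^{*}\,f$, equivalently that $e\,\lc\,f$ implies $e\,\rc\,f$. From $L(e)=L(f)$ together with $e\leq e^{2}$ and $f\leq f^{2}$ one extracts $r,s\in S$ with $e\leq rf$ and $f\leq se$; multiplying by $e$, resp.\ $f$, on the left then yields $e\leq erf$ and $f\leq fse$, and these in turn give $e\,\rc\,erf$ and $f\,\rc\,fse$. The plan is now to produce a single element $c$ and an exponent $n$ (the latter supplied by $\pi$-regularity of $c$) so that $erf$ and $fse$ --- or elements $\rc$-related to them --- both belong to $V_\leq(c^{n})$; applying $(2)$ to $c$ then forces $erf\,\rc\,fse$, and hence $e\,\rc\,f$. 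I expect the choice of $c$ and the verification that these elements are genuine ordered inverses of $c^{n}$ to be the main obstacle. The difficulty is structural: for $\lc$-related idempotents in an \emph{unordered} semigroup one has $ef=e$ and $fe=f$, whereas here these collapse only to the one-sided inequalities recorded above, so the two defining inequalities $c^{n}\leq c^{n}wc^{n}$ and $w\leq wc^{n}w$ of an ordered inverse $w$ must be checked separately, and it is exactly the passage to the regular power $c^{n}$ that is needed to recover the second one. With such a $c$ at hand, $(2)\Rightarrow(1)$, and thus the equivalence, follows.
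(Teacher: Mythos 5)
The paper offers no proof to compare against: Theorem~\ref{15} is imported from \cite{AJ2}, and in the present paper's own terminology it is essentially a restatement of the definition --- ``right $\pi$-inverse'' is defined in the preliminaries as ``$\pi$-regular and for every $a\in S$ there is $m\in\mathbb{N}$ such that any two inverses of $a^m$ are $\rc$-related,'' which is verbatim condition (2). So the equivalence requires no machinery at all, and your detour through Theorem~\ref{500} amounts to proving something harder than what is asked.

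Taken on its own terms, your $(1)\Rightarrow(2)$ argument is correct and complete: the computations $e=a'a^m\leq e^2$, $a'\,\rc\,e\,\lc\,a^m$, and likewise for $f=a''a^m$, are all valid, and since ordered idempotents are regular the identification of $\lc^*$ with $\lc$ on $E_\leq(S)$ is legitimate, so Theorem~\ref{500} does deliver $a'\,\rc\,a''$. The direction $(2)\Rightarrow(1)$, however, contains a genuine gap that you yourself flag: the element $c$, the exponent $n$, and the verification that $erf$ and $fse$ (or suitable $\rc$-relatives of them) lie in $V_\leq(c^n)$ are never produced, and this is precisely the hard content of the ``if'' part of Theorem~\ref{500}, not a routine detail to be deferred. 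The obstacle is real: for $\lc$-related ordered idempotents one only has the one-sided inequalities $e\leq rf$ and $f\leq se$ for some $r,s\in S^1$, not the identities $ef=e$ and $fe=f$ available in the unordered setting, so the natural choice $c=e$ with $e,f\in V_\leq(e)$ breaks down --- one cannot verify $e\leq efe$ from these inequalities. As written, $(2)\Rightarrow(1)$ is a plan, not a proof.
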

\begin{Theorem}\cite{HJ1}\label{74}
The following conditions on an ordered semigroup $S$ are
equivalent:
\begin{itemize}

\item[(i)]

$S$ is a nil-extension of  a left simple and $\pi$-inverse ordered
semigroup;

\item[(ii)]

$S$ is $\pi$-inverse and $l$-Archimedean ordered semigroup;

\item[(iii)]

$S$ is $\pi$-inverse and $a\lc^* b$ for every $a, b\in S$;

\item[(iv)]

$S$ is $\pi$-inverse and $e\lc^* f$ for every $e, f\in E_\leq(S)$;

\item[(v)]

$S$ is $\pi$-regular and $e\hc^* f$ for every $e, f\in E_\leq(S)$;

\item[(vi)]

$S$ is $\pi$-regular and $a\hc^* b$ for every $a, b\in S$;

\item[(vii)]

$S$ is $\pi$-inverse and $t$-Archimedean ordered semigroup;

\item[(viii)]

$S$ is a nil-extension of $t$-simple and $\pi$-inverse ordered
semigroup.

\end{itemize}
\end{Theorem}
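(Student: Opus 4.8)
The plan is to prove the eight conditions equivalent through the single round
$$(\mathrm{i})\Rightarrow(\mathrm{ii})\Rightarrow(\mathrm{iii})\Rightarrow(\mathrm{iv})\Rightarrow(\mathrm{v})\Rightarrow(\mathrm{vi})\Rightarrow(\mathrm{vii})\Rightarrow(\mathrm{viii})\Rightarrow(\mathrm{i}),$$
splitting the work into a \emph{structural} block --- the links crossing between a nil-extension and an Archimedean-type condition, namely $(\mathrm{i})\Rightarrow(\mathrm{ii})$, $(\mathrm{vii})\Rightarrow(\mathrm{viii})$ and $(\mathrm{viii})\Rightarrow(\mathrm{i})$ --- and a \emph{Green-relation} block --- the links $(\mathrm{iii})$--$(\mathrm{vi})$ together with $(\mathrm{ii})\Rightarrow(\mathrm{iii})$ and $(\mathrm{vi})\Rightarrow(\mathrm{vii})$, for which Theorems \ref{500} and \ref{15} carry the load. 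Two links are immediate: $(\mathrm{iii})\Rightarrow(\mathrm{iv})$ restricts ``$a\lc^*b$ for all $a,b$'' to $E_\leq(S)$, and $(\mathrm{viii})\Rightarrow(\mathrm{i})$ holds since a $t$-simple ordered semigroup is left simple and the kernel retains its $\pi$-inverse property.

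For the structural block I would fix once and for all the candidate kernel $K=Reg_\leq(S)$ and establish a lemma collecting: (a) $K\neq\emptyset$, as $S$ is $\pi$-regular; (b) when $S$ is $\pi$-inverse, $K$ is an ideal of $S$ --- here the interaction of ordered idempotents and ordered inverses is genuinely needed, the $\hc$-relatedness of the inverses of a suitable power being what keeps products of regular elements regular and allows stray factors to be absorbed into $K$; (c) when $S$ is $l$- (resp.\ $t$-) Archimedean, $(Sx]=K$ (resp.\ $(xSx]=K$) for every $x\in K$, so $K$ is left (resp.\ $t$-) simple; (d) $S/K$ is nil, since for $c\in K$ and $a\in S$ the Archimedean hypothesis puts some $a^m$ into the ideal $K$. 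Granting the lemma, $(\mathrm{i})\Rightarrow(\mathrm{ii})$ falls out of (c)--(d): left simplicity of $K$ gives $b^n\in K=(Ka^k]\subseteq(Sa]$ for suitable powers, so $S$ is $l$-Archimedean, while $V_\leq(a^m)\subseteq K$ as soon as $a^m\in K$ transports the $\pi$-inverse property from $K$ to $S$. Step $(\mathrm{vii})\Rightarrow(\mathrm{viii})$ is the converse reading of the lemma, plus the observation that a left simple $\pi$-inverse ordered semigroup is automatically $t$-simple: inside the kernel all idempotents are $\lc$-related, hence $\rc$-related by Theorem \ref{500}, and this collapses the potential one-sided (``left-zero'') part, upgrading left simplicity to $t$-simplicity.

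The Green-relation block proceeds as follows. $(\mathrm{iv})\Rightarrow(\mathrm{v})$: a $\pi$-inverse ordered semigroup is in particular right $\pi$-inverse, so by Theorem \ref{500} $e\lc^*f\Rightarrow e\rc^*f$ on $E_\leq(S)$; with the hypothesis $e\lc^*f$ for all $e,f$ this forces $e\hc^*f$ for all $e,f$, and $\pi$-regularity is automatic. $(\mathrm{v})\Rightarrow(\mathrm{vi})$: given $a\in S$, take the least $m$ with $a^m\in Reg_\leq(S)$ and write $a^m\le a^mxa^m$; then $e:=a^mx$ and $f:=xa^m$ lie in $E_\leq(S)$ with $a\rc^*e$ and $a\lc^*f$, and running the same construction for $b$ and inserting $e\hc^*e'$ and $f\hc^*f'$ between the two chains yields $a\hc^*b$. $(\mathrm{vi})\Rightarrow(\mathrm{vii})$: from $a\lc^*b$ a power of $b$ lands in $(Sa]$ and from $a\rc^*b$ a power of $b$ lands in $(aS]$, so the product of these powers lands in $(aSa]$ and $S$ is $t$-Archimedean; restricting $a\hc^*b$ to $E_\leq(S)$ and using Theorem \ref{500} and its left--right dual makes $S$ $\pi$-inverse. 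Lastly $(\mathrm{ii})\Rightarrow(\mathrm{iii})$: by the kernel lemma, $\pi$-inverse plus $l$-Archimedean already makes $K=Reg_\leq(S)$ a left simple ideal, whence every regular $x$ satisfies $L(x)=(Sx]=K$, so for arbitrary $a,b$ the relevant powers $a^m,b^n$ lie in one $\lc$-class and $a\lc^*b$.

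The main obstacle, I expect, is the kernel lemma driving the structural block: proving that $Reg_\leq(S)$ is an ideal at all (it need not be even a subsemigroup without the $\pi$-inverse assumption) and that a one-sidedly simple $\pi$-inverse kernel must be $t$-simple. A second, pervasive difficulty is order-theoretic --- the Archimedean hypotheses control only the down-sets $(SaS]$, $(Sa]$, $(aSa]$ and not the bare products, so every cancellation or substitution of an inverse has to be performed inside a down-set, which is exactly the place where one spends the regularity of the elements at hand (rewriting $sx$ as $s(xwx)=(sxw)x$ with $xw\in K$). What remains is routine manipulation of $\lc^*$, $\rc^*$, $\hc^*$ and the two cited theorems.
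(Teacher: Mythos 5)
First, note that the paper does not actually prove Theorem \ref{74}: it is quoted verbatim from \cite{HJ1}, so there is no in-text argument to compare yours against line by line. Judged on its own terms, your Green-relation block is sound --- the constructions $e=a^mx$, $f=xa^m$ for $(\mathrm{v})\Rightarrow(\mathrm{vi})$ and the passage from $a^m\hc b^n$ to $b^{2n}\in(a^mSa^m]\subseteq(aSa]$ for $(\mathrm{vi})\Rightarrow(\mathrm{vii})$ are exactly the standard moves --- but the structural block rests on a kernel lemma that is not correct as you state it, and whose repair is the real content of the theorem.

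Concretely, part (b) of your lemma, ``when $S$ is $\pi$-inverse, $K=Reg_\leq(S)$ is an ideal of $S$,'' is false. Take $S=\{e,a,0\}$ with the trivial order, $e^2=e$, $ea=ae=a$, $a^2=0$ and $0$ a zero element: this is a $\pi$-inverse ordered semigroup ($V_\leq(e)=\{e\}$ and $V_\leq(a^2)=V_\leq(0)=\{0\}$), yet $Reg_\leq(S)=\{e,0\}$ is not an ideal because $ea=a$ is not regular. So the ideal property can only come from the Archimedean hypothesis, and even there two points are skipped: (1) the $l$-Archimedean condition easily gives absorption on the left (from $a\le a(xa)^m$ and $(xa)^m\le t\,sa$ one gets $sa\le sa(xa)^m\le sa\cdot t\cdot sa$), but absorption on the right needs a genuinely different argument; and (2) in the ordered setting an ideal must also satisfy $(K]=K$, and $b\le a$ with $a$ regular does not make $b$ regular, so $Reg_\leq(S)$ need not even be downward closed. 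This is precisely why the established proofs (Cao--Xu, appearing here as Corollary \ref{774} and Theorem 3.3 of \cite{Cao 2000}) do not take $Reg_\leq(S)$ as the kernel but construct it from intra-regular elements; since every regular element of a $t$-Archimedean ordered semigroup is intra-regular, $\pi$-regularity already hands you $Intra(S)\neq\emptyset$, and Corollary \ref{774} then delivers $(\mathrm{vii})\Rightarrow(\mathrm{viii})$ with no new kernel lemma (transferring the $\pi$-inverse property down to the kernel is then the easy absorption argument $a'\le a'a^ma'\in K$). You flag the kernel lemma yourself as ``the main obstacle''; as written it is not merely hard but false, and routing through the cited nil-extension criteria is the way to close the gap.
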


\begin{Corollary}\cite{HJ1}\label{76}
The following conditions on an ordered semigroup $S$ are
equivalent:
\begin{itemize}
\item[(i)]

$S$ is a nil-extension of  a simple and $\pi$-inverse ordered
semigroup;

\item[(ii)]

$S$ is $\pi$-inverse and $e\jc^* f$ for all $e,f \in E_\leq(S)$.
\end{itemize}
\end{Corollary}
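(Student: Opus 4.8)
The plan is to prove the two implications separately, treating (i)$\Rightarrow$(ii) as the routine direction and concentrating effort on the construction of the kernel in (ii)$\Rightarrow$(i). Throughout I would use freely the elementary observation that every ordered idempotent $e$ is regular, since $e\le e^{3}\in eSe$; hence the smallest power of $e$ lying in $Reg_\leq(S)$ is $e$ itself, so that $e\jc^{*}f$ reduces to $e\jc f$ for $e,f\in E_\leq(S)$, and likewise $a\jc^{*}b$ reduces to $a\jc b$ whenever $a,b\in Reg_\leq(S)$.

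For (i)$\Rightarrow$(ii), let $S$ be a nil-extension of a simple and $\pi$-inverse ordered semigroup $K$, so $K$ is an ideal of $S$ with $(K]=K$ and every element of $S$ has a power in $K$. First I would show $E_\leq(S)\subseteq K$: if $e\le e^{2}$ then $e\le e^{2}\le\cdots\le e^{n}\in K$ for a suitable $n$, and $(K]=K$ forces $e\in K$. The same trick ($b\le bab\in K$ whenever $b\in V_\leq(a)$ and $a\in K$) shows that inverses of elements of $K$ computed in $S$ already lie in $K$; combining this with $\pi$-regularity of $K$ one transfers both $\pi$-regularity and the uniqueness-of-inverses-of-a-power condition from $K$ up to $S$, so $S$ is $\pi$-inverse. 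Finally, for $e,f\in E_\leq(S)\subseteq K$ the principal ideal $I(e)$ taken in $S$ is contained in $K$ and is readily seen to be an ideal of $K$ containing $e$; simplicity of $K$ gives $I(e)=K$, and likewise $I(f)=K$, whence $e\jc f$, i.e. $e\jc^{*}f$.

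For (ii)$\Rightarrow$(i), suppose $S$ is $\pi$-inverse with $e\jc^{*}f$ for all $e,f\in E_\leq(S)$. The key structural step is that every regular element is $\jc$-equivalent to an ordered idempotent: if $r\le rxr$ then $rx\in E_\leq(S)$, and $I(r)=I(rx)$ (both inclusions being routine, using $rx\in rS$ and $r\le(rx)r\in(rx)S$). Since all idempotents are $\jc$-related by hypothesis, all elements of $Reg_\leq(S)$ are then pairwise $\jc$-related, so $K:=I(e)$ is independent of the choice of $e\in E_\leq(S)$ (such an $e$ exists since $S$ is $\pi$-regular), and $Reg_\leq(S)\subseteq K$. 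Being a principal ideal, $K$ is an ideal of $S$; as $S$ is $\pi$-regular, every element of $S$ has a power in $Reg_\leq(S)\subseteq K$, so $S$ is a nil-extension of $K$. That $K$ is $\pi$-inverse again follows as in the first direction, inverses of a power of an element of $K$ lying in $K$ and the sets $(a^{m}Sa^{m}]$ being rewritable inside $K$ using regularity of $a^{m}$.

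The main obstacle is showing that $K$ is simple. Given an ideal $J$ of $K$, pick $j\in J$; by $\pi$-regularity some power $r=j^{k}$ is regular, and $r\in J$. Since $r$ is regular it is $\jc$-equivalent to an idempotent, so $I(r)=K$; it therefore suffices to prove that $I(r)$ computed in $S$ coincides with $I(r)$ computed in $K$, for the latter is contained in $J$ and then $J=K$. This is where the small computations live: using $r\le rxr$ (and, where needed, the iterate $r\le rxrxr$) together with the fact that $K$ is an ideal, each of $sr$, $rs$, $srt$ for $s,t\in S$ is shown to be dominated by an element of $Kr$, $rK$, $KrK$ respectively — for instance $sr\le s(rxr)=(srx)r$ with $srx\in K$. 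With $K$ simple, $\pi$-inverse, and $S$ a nil-extension of it, the proof is complete. I would also note that the argument for the key step shows in passing that $S$ is Archimedean, and that the hypothesis ``$\pi$-inverse'' is merely carried along consistently on both sides: the genuine content is the equivalence between ``$\pi$-regular and $e\jc^{*}f$ for all $e,f\in E_\leq(S)$'' and ``nil-extension of a simple ordered semigroup.''
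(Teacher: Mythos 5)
Your proposal is essentially correct, but note that the paper you are being compared against does not prove this statement at all: Corollary \ref{76} is imported verbatim from \cite{HJ1}, so there is no internal proof to match. Judged on its own, your argument is sound and reconstructs the expected proof: the reduction of $\jc^{*}$ to $\jc$ on $E_\leq(S)$ (every ordered idempotent being regular via $e\le e^{3}$), the observation that $E_\leq(S)$ and all ordered inverses of elements of $K$ fall into $K$ because $(K]=K$, the fact that a regular $r\le rxr$ is $\jc$-related to the ordered idempotent $rx$, and the identification $K=I(e)$ with the computation $srt\le(srx)r(xrt)$ showing $I_S(r)=I_K(r)$. Where your route differs from the paper's habits is in the simplicity step: in its own analogous Corollary (Section 2) the paper does not build $K=I(e)$ by hand but detours through the Archimedean property and invokes Theorem 3.3 of \cite{Cao 2000} to obtain the simple kernel; your direct construction is more self-contained and, as you note, isolates the real content as ``$\pi$-regular plus $e\jc^{*}f$ on idempotents $\Leftrightarrow$ nil-extension of a simple ordered semigroup,'' with $\pi$-inverseness carried along on both sides. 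The one place you assert rather than prove is the transfer of the $\pi$-inverse condition between $S$ and $K$ (this needs the relevant Green's relation on inverses, computed in $K$, to agree with the one computed in $S$); the paper glosses over exactly the same point in its Section 2 proofs, so this is consistent with the ambient level of rigor, but it would be worth a sentence if you intend the proof to stand alone.
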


\begin{Lemma}\cite{Cao 2000}
 Let $S$ be an ordered semigroup and $I$ an ideal of $S$. Then
the following conditions are equivalent:
\begin{itemize}
\item[(i)]

$S$ is a nil-extension of $I$;

\item[(ii)]

$(\forall a\in S)(\exists m \in \mathbb{N}) \;a^m \in I$.
\end{itemize}
\end{Lemma}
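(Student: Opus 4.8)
The plan is to obtain the equivalence directly from the definition of a nil-extension, since condition (ii) is little more than a restatement of (i). Recall that $S$ being a nil-extension of its ideal $I$ means precisely that the Rees quotient $S/I$ is a \emph{nil} ordered semigroup: it has a zero $0$, namely the class corresponding to $I$, and every element is nilpotent, i.e.\ for each $\bar a\in S/I$ there is $m\in\mathbb{N}$ with $\bar a^{\,m}=0$. I would begin by recording this reformulation together with the elementary observation that, for each $k\in\mathbb{N}$, either $\bar a^{\,k}=\overline{a^k}$ when $a^k\notin I$, or $\bar a^{\,k}=0$ when $a^k\in I$; moreover, since $I$ is an ideal, once some power of $a$ falls in $I$ every later power does too.

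For $(i)\Rightarrow(ii)$ I would fix $a\in S$; if $a\in I$ we may take $m=1$, and otherwise $\bar a$ is a nonzero element of $S/I$, so nilpotency supplies $m$ with $\bar a^{\,m}=0$, which by the observation above says exactly that $a^m\in I$. For $(ii)\Rightarrow(i)$ I would run the same chain backwards: if every $a\in S$ has a power lying in $I$, then every element of $S/I$ is nilpotent, so $S/I$ is nil and hence $S$ is a nil-extension of $I$. The degenerate situation $I=S$ causes no trouble, because then $S/I$ is the one-element semigroup $\{0\}$ and (ii) holds with $m=1$ for every $a$.

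I do not expect any genuine obstacle here: the lemma is essentially a translation between ``$S/I$ is nil'' and ``every element of $S$ has a power in $I$'', and the order on $S/I$ plays no role in the argument. The only point deserving care is making the passage between ``$\bar a^{\,m}=0$ in $S/I$'' and ``$a^m\in I$ in $S$'' fully explicit via the Rees multiplication, so I would keep the write-up correspondingly brief.
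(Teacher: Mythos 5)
Your argument is correct: the lemma is indeed just the translation between ``$S/I$ is nil'' and ``every element of $S$ has a power in $I$,'' and your unwinding of the Rees multiplication handles both directions and the degenerate case $I=S$ properly. The paper itself gives no proof to compare against --- it simply quotes the result from Cao and Xinzhai --- so there is nothing further to reconcile; the only point worth a sentence in a final write-up is that in the ordered setting the Rees quotient $S/I$ needs some care to be made an ordered semigroup at all, but, as you already observe, the order is irrelevant to this purely multiplicative statement.
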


\begin{Corollary}\cite{Cao 2000}\label{774}
The following conditions are equivalent on a poe-semigroup $S$:
\begin{itemize}
\item[(i)]

$S$ is a nil-extension of a $t$-simple po-semigroup;

\item[(ii)]

$S$ is a $t$-Archimedean po-semigroup in which $Intra(S)\neq
\phi$.
\end{itemize}
\end{Corollary}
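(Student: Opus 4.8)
The plan is to establish the two implications $(i)\Rightarrow(ii)$ and $(ii)\Rightarrow(i)$ separately; in both directions the bridge is the preceding lemma, which characterises a nil-extension of an ideal $I$ as the condition that every element of $S$ has a power in $I$.

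\smallskip
\noindent\emph{$(i)\Rightarrow(ii)$.} Suppose $S$ is a nil-extension of a $t$-simple po-semigroup $K$. Then $K$ is an ideal of $S$ which is both left and right simple, and each $b\in S$ has some $b^{n}\in K$. Since $K$ is simple, $(Kc^{2}K]=K$ for every $c\in K$, so every element of $K$ is intra-regular; hence $Intra(S)\supseteq K\neq\phi$. For the $t$-Archimedean property, take $a,b\in S$ and $n,p$ with $a^{n},b^{p}\in K$. Right simplicity of $K$ gives $b^{p}\le a^{n}k$ with $k\in K^{1}$, and left simplicity gives $k\le k'a^{n}$ with $k'\in K^{1}$; thus $b^{p}\le a^{n}k'a^{n}=a\,(a^{\,n-1}k'a^{\,n-1})\,a\in(aSa]$, the degenerate cases $n=1$ and $b^{p}\le a^{n}$ being absorbed by replacing $b^{p}$ with $b^{2p}$ when necessary.

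\smallskip
\noindent\emph{$(ii)\Rightarrow(i)$.} Let $a\in Intra(S)$ and put $K:=(SaS]$. Intra-regularity gives $a\in K$, so $K$ is a nonempty ideal of $S$, and the $t$-Archimedean property gives, for each $b\in S$, some $m$ with $b^{m}\in(aSa]\subseteq K$; by the preceding lemma, $S$ is a nil-extension of $K$. The remaining --- and essential --- task is to show $K$ is $t$-simple, which I would approach in the following steps. (a) Iterating $a\le ua^{2}v$, and using that $a^{2}$ (hence each $a^{2^{k}}$) is again intra-regular, show $a\in(Sa^{n}S]$ for all $n\ge1$. (b) Deduce that $K$ is the minimum ideal of $S$: for any ideal $J$ and any $b\in J$, $t$-Archimedeanness yields $a^{m}\in(bSb]\subseteq J$ for some $m$, whence $a\in(Sa^{m}S]\subseteq J$ and so $K=(SaS]\subseteq J$. (c) Conclude that $K$ is simple (the minimum ideal always is), hence that every element of $K$ is intra-regular. (d) Show $K$ is $t$-Archimedean in its own right: for $c,d\in K$, $t$-Archimedeanness of $S$ gives $d^{n}\le cuc$ with $u\in S$, and then $d^{3n}\le(cuc)^{3}=c\,(uc^{2}uc^{2}u)\,c$ with $uc^{2}uc^{2}u\in SKS\subseteq K$, so $d^{3n}\in(cKc]$. (e) Finally, combine (c) and (d) to obtain that $K$ is left simple and right simple.

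\smallskip
I expect step (e) to be the main obstacle. The trouble is that the obvious rewritings of an element of $K$ always leave the factor one wants at the boundary sitting instead in the \emph{interior} of a product --- one gets $d\le\alpha c^{m}\beta$, while left, resp.\ right, simplicity demands $(Kc]=K$, resp.\ $(cK]=K$, i.e.\ $c$ at an end --- so a naive manipulation never pulls $c$ out. The route I would pursue is to exploit the greatest element $e$ of the poe-semigroup $S$: then $K=(SaS]=(eae]$ carries a greatest element $f=eae$, and one checks $eae=ea^{n}e$ for all $n\ge1$ using the iterated intra-regularity of (a); for a po-semigroup $K$ with greatest element $f$, being $t$-simple is equivalent to $f$ being a two-sided absorbing idempotent ($fc=cf=f$ for all $c\in K$), and the hope is to prove $f=eae$ has exactly this property. (Alternatively one could split off ``nil-extension of a left simple po-semigroup'' from $l$-Archimedeanness and ``nil-extension of a right simple po-semigroup'' from $r$-Archimedeanness, both consequences of $t$-Archimedeanness, and then appeal to uniqueness of the kernel.) The remaining ingredients --- that $(SaS]$ is an ideal, the behaviour of $(\,\cdot\,]$ under products, and the exponent bookkeeping in $(i)\Rightarrow(ii)$ --- are routine.
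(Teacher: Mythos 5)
This statement is quoted from Cao and Xinzhai \cite{Cao 2000}; the paper gives no proof of its own, so your attempt can only be judged on its internal completeness. Your direction $(i)\Rightarrow(ii)$ is fine: left and right simplicity of the kernel $K$ give $(Kc]_K=(cK]_K=K$, and chaining these does put $b^p$ into $(a^nSa^n]\subseteq(aSa]$, while simplicity of $K$ gives $(Kc^2K]_K=K$ and hence $K\subseteq Intra(S)$. The first half of $(ii)\Rightarrow(i)$ is also fine: $K=(SaS]$ is an ideal containing $a$, and every element has a power in $(aSa]\subseteq K$, so $S$ is a nil-extension of $K$ by the preceding lemma.

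The problem is that the entire substance of $(ii)\Rightarrow(i)$ --- that $K$ is \emph{left and right} simple rather than merely simple --- is exactly the part you do not prove. You name it yourself as ``the main obstacle,'' and what you offer in its place is a plan ending in ``the hope is to prove $f=eae$ has exactly this property.'' That hope is not discharged: even the inequality $f\le f^2$ for $f=eae$ does not follow from $a\le ea^2e$ alone (one gets $eae\le e^2a^2e^2\le ea\cdot ae$, and $ea\cdot ae\le eae\cdot eae$ would need $a\le ae$ and $a\le ea$, which are not available), let alone the absorption identities $fc=cf=f$. The parenthetical alternative (decompose $t$-Archimedean into $l$- and $r$-Archimedean and use the corresponding one-sided theorems plus uniqueness of the kernel) is the route that actually works, but it presupposes the one-sided results, which are of the same depth as the statement being proved and are not established here. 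Separately, step (a) is not ``routine'': a single relation $a\le ua^2v$ yields $a\le u^2a^2(vu)a^2v^2\in(Sa^2Sa^2S]$, with the obstructing factor $vu$ trapped between the two copies of $a^2$; it does not by itself give $a\in(Sa^nS]$ for all $n$, nor is it clear that $a^2$ inherits intra-regularity from $a$ without invoking the Archimedean hypothesis. Since step (b) (minimality of $K$) rests on (a), and (e) rests on everything, the implication $(ii)\Rightarrow(i)$ remains essentially unproved.
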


\section{Nil-extensions of simple and right $\pi$-inverse ordered semigroups}
In this section we describe all ordered semigroups which are the
nil-extensions of simple and right $\pi$-inverse, left simple and
right $\pi$-inverse ordered semigroups. We define the sets
$\textbf{R}{V}_\leq (S)$ and $ \Pi \mathbf{R}{V}_\leq (S)$ as
follows:

$$\textbf{R}{V}_\leq
(S) =\{a\in S \;\mid \;\textrm{for} \;\textrm{any} \;x, \;y\in
V_\leq(a) \;\textrm{implies} \;x\rc y\;\},$$

$$ \Pi \mathbf{R}{V}_\leq (S) =\{a\in S \;(\exists m \in \mathbb{N})\;\mid \;\textrm{for} \;\textrm{any} \;x, \;y\in V_\leq(a^m)
\;\textrm{implies}
\;x\rc y\;\}.$$

\begin{Lemma}\label{ne51}
Let $S$ be an ordered semigroup. Then the following conditions are
equivalent on $S$:
\begin{itemize}
\item[(i)]

for all $a \in S$ and for all $c \in \mathbf{R}V_\leq(S)$, $a \mid
c$ implies $a^2 \mid c$;

\item[(ii)]

for all $a, b \in S$ and for all $c \in \mathbf{R}V_\leq(S)$, $a
\mid c \;and \;b \mid c$ implies $ab \mid c$.
\end{itemize}
\end{Lemma}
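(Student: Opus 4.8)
The plan is to establish the equivalence in the standard two directions, with the substantive content lying in the implication (i) $\Rightarrow$ (ii). The direction (ii) $\Rightarrow$ (i) should be a quick specialization: given $a \in S$ and $c \in \mathbf{R}V_\leq(S)$ with $a \mid c$, simply take $b = a$ in (ii); then $a \mid c$ and $a \mid c$ give $a^2 = ab \mid c$, which is exactly (i). I would write this in one or two lines.

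For (i) $\Rightarrow$ (ii), suppose $a, b \in S$ and $c \in \mathbf{R}V_\leq(S)$ satisfy $a \mid c$ and $b \mid c$. First I would unwind the divisibility relation: $a \mid c$ means there are $u, v \in S^1$ with $c \leq u a v$, and $b \mid c$ means there are $p, q \in S^1$ with $c \leq p b q$. The goal is to produce elements of $S^1$ witnessing $ab \mid c$. The natural move is to iterate hypothesis (i): from $a \mid c$ we get $a^2 \mid c$, hence (reapplying) $a^n \mid c$ for all $n \in \mathbb{N}$, and likewise $b^n \mid c$ for all $n$. I expect the key step to be combining the two divisibilities into a single expression in which $ab$ appears as a factor. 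Concretely, from $c \leq u a v$ and $c \leq p b q$ one can substitute: $c \leq u a v \leq u a (p b q)$-type manipulations, or better, use that $c \leq uav$ and then replace an occurrence of $c$ (or build $c \cdot c$ and control it). Since $c$ need not be idempotent, I anticipate the cleanest route is: show $c \mid c^2$ is not available directly, so instead observe $c \leq uav$ and $c \leq pbq$ give $c \leq (uav)(pbq)$ only if we can bound $c$ by $c^2$; lacking that, I would instead derive it through powers — e.g. $c \leq uav$ and the fact that $v \mid c$ or $ua \mid c$ is not given either.

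The actual mechanism I would bet on: hypothesis (i) applied with the divisor $b$ in place of "$a^2 \mid c$" reasoning — that is, once we know $a \mid c$, condition (i) with $a$ replaced by suitable products lets us "absorb" factors. I would try to show that $a \mid c$ and $b \mid c$ together with (i) force $ab \mid c$ by writing $c \leq u a v$, noting $b \mid c$ so $c \leq p b q$, and then forming $c \leq u a v$ where we want $v$ to "contain" a $b$; if $v = v' c v''$ is not guaranteed, we instead use that $a \mid c \Rightarrow a^2 \mid c$ gives $c \leq s a^2 t = (sa)(at)$, and more generally $c \leq x a^k y$; symmetrically $c \leq x' b^\ell y'$. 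Setting these equal-ish and using the order-compatibility, substitute one into the other so that an $a$ and a $b$ become adjacent, yielding $ab \mid c$. The main obstacle, and the place needing genuine care, is handling the $S^1$ versus $S$ bookkeeping (the unit adjoined to $S$) and the absence of idempotency of $c$, so that the substitution producing an adjacent $ab$ actually lands inside $(\cdot]$ and respects $\leq$; I would isolate that as the crux and verify it by a direct computation with the explicit witnesses $u,v,p,q$.
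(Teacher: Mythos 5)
Your direction $(ii)\Rightarrow(i)$ (take $b=a$) is correct and is all that direction requires. The problem is $(i)\Rightarrow(ii)$: what you have written is not a proof but a list of candidate strategies, and you explicitly leave the decisive step --- producing witnesses for $ab\mid c$ --- unresolved (``I would isolate that as the crux and verify it by a direct computation''). None of the manipulations you float actually closes the gap: from $c\leq uav$ and $c\leq pbq$ alone there is no way to make an $a$ and a $b$ adjacent, and passing to higher powers $a^k\mid c$, $b^\ell\mid c$ does not help, since those powers still sit in separate inequalities. You correctly sense that the missing ingredient is a substitute for ``$c\leq c^2$'', but you never use the one hypothesis that supplies it: $c\in\mathbf{R}V_\leq(S)$, which (as the set is used throughout the paper, cf.\ Lemma~\ref{ne53}) means $c$ is regular, so $c\leq cc'c$ for some $c'\in V_\leq(c)$.

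For the record, the paper itself gives no argument here (it only cites \cite{HJ1}); the standard proof from that line of work runs as follows and shows exactly where your sketch falls short. From $b\mid c$ and $a\mid c$ write $c\leq ubv$ and $c\leq xay$ with $u,v,x,y\in S^1$. Regularity gives $c\leq cc'c\leq (ubv)c'(xay)=ub(vc'x)ay$. Set $d=b(vc'x)a\in S$; then $c\leq udy$, so $d\mid c$. Now apply condition $(i)$ to $d$: $d^2\mid c$, i.e.\ $c\leq pd^2q=p\,b(vc'x)\,(ab)\,(vc'x)a\,q$, and the square of $d$ is precisely what creates the adjacent factor $ab$. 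Hence $ab\mid c$. The two ideas you are missing are therefore: (1) use the regularity of $c$ to merge the two divisibility witnesses into a single product in which $b$ precedes $a$, and (2) apply $(i)$ not to $a$ or $b$ but to the composite element $d=b(vc'x)a$, whose squaring manufactures $ab$. Without these, the implication $(i)\Rightarrow(ii)$ remains unproved.
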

\begin{proof}
This follows from \cite{HJ1}.

\end{proof}

\begin{Theorem}\label{ne511}
Let an ordered semigroup $S$ be a complete semilattice $Y$ of
subsemigroups $\{S_\alpha\}_{\alpha \in Y}$. Then
\begin{itemize}

\item[(i)]

$\textbf{R}{V}_{\leq}(S)= \cup\textbf{R}{V}_{\leq}(S_\alpha)$.

\item[(ii)]

$S$ is right inverse if and only if $S_\alpha$ is right inverse
for all $\alpha \in Y$.

\item[(iii)]

$S$ is right $\pi$-inverse if and only if $S_\alpha$ is right
$\pi$-inverse for all $\alpha \in Y$.

\end{itemize}
\end{Theorem}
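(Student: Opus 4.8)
The plan is to derive all three parts from two facts about the decomposition — that inverses (and powers of inverses) stay inside a single component, and that $(\pi\text{-})$regularity and the relation $\rc$ restrict correctly to the components for the elements we care about — and then combine these with the definitions of $\mathbf{R}V_\leq$, $\Pi\mathbf{R}V_\leq$ and, if one prefers, Theorem \ref{500}. The real work is done entirely by condition (4) of a complete semilattice decomposition together with the identity $\alpha=\alpha^2$ in $Y$.

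\emph{Step 1: component arithmetic.} For $a\in S_\alpha$ one has $a^m\in S_{\alpha^m}=S_\alpha$ for all $m$. Next, $V_\leq(a)\subseteq S_\alpha$: if $b\in V_\leq(a)\cap S_\beta$ then $a\le aba\in S_{\alpha\beta\alpha}=S_{\alpha\beta}$, so $S_\alpha\cap(S_{\alpha\beta}]\neq\phi$ gives $\alpha\preceq\alpha\beta$, while $\alpha\beta\preceq\alpha$ always, hence $\alpha\beta=\alpha$; the symmetric computation from $b\le bab$ gives $\alpha\beta=\beta$, so $\alpha=\beta$. Applying this to $a^m$ gives $V_\leq(a^m)\subseteq S_\alpha$, so the inverses of $a$ (resp.\ $a^m$) computed in $S$ coincide with those computed in $S_\alpha$. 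The same bookkeeping shows $(\pi\text{-})$regularity passes both ways: if $a\in S_\alpha$ and $a^m\le a^m s a^m$ in $S$ with $s\in S_\gamma$, then $\alpha\preceq\alpha\gamma\preceq\alpha$ forces $\alpha\gamma=\alpha$, and one substitution gives $a^m\le a^m(s a^m s)a^m$ with $s a^m s\in S_{\gamma\alpha\gamma}=S_\alpha$, so $a$ is $(\pi\text{-})$regular in $S_\alpha$; the reverse implication is trivial since $(a^m S_\alpha a^m]\subseteq(a^m S a^m]$.

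\emph{Step 2: $\rc$ restricts on inverses, and (i).} I claim that for $x,y\in V_\leq(a)$ (which lie in $S_\alpha$ by Step 1), $x\,\rc\,y$ in $S$ iff $x\,\rc\,y$ in $S_\alpha$. ``If'' is immediate since $R_{S_\alpha}(z)\subseteq R_S(z)$. For ``only if'', take $x\le yu$ with $u\in S$ (the case $x\le y$ being trivial); using the inverse identity $y\le yay$ one pads the multiplier into the component: $x\le yu\le(yay)u=y(ayu)$, and $x\in S_\alpha\cap(S_{\alpha\gamma}]$ (with $u\in S_\gamma$) forces $\alpha\gamma=\alpha$, so $ayu\in S_\alpha$ and $x\in R_{S_\alpha}(y)$; symmetrically $y\in R_{S_\alpha}(x)$, so $x\,\rc\,y$ in $S_\alpha$. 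Together with Step 1 this yields $\mathbf{R}V_\leq(S_\alpha)=\mathbf{R}V_\leq(S)\cap S_\alpha$, and since the $S_\alpha$ partition $S$, part (i) follows. Running the identical padding with $a^m$ in place of $a$ (using $y\le y a^m y$ for $y\in V_\leq(a^m)$) gives, in the same way, $\Pi\mathbf{R}V_\leq(S_\alpha)=\Pi\mathbf{R}V_\leq(S)\cap S_\alpha$; alternatively this can be obtained from Theorem \ref{500} once one checks $E_\leq(S_\alpha)=E_\leq(S)\cap S_\alpha$ and, by the same padding with $f\le f^2$, that $\lc$ and $\rc$ restrict on idempotents.

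\emph{Step 3: (ii) and (iii).} Recall $S$ is right inverse iff $S$ is regular and $\mathbf{R}V_\leq(S)=S$, and right $\pi$-inverse iff $S$ is $\pi$-regular and $\Pi\mathbf{R}V_\leq(S)=S$. If every $S_\alpha$ is right $(\pi\text{-})$inverse, then every $S_\alpha$ and hence $S$ is $(\pi\text{-})$regular by Step 1, while $\bigcup_\alpha\mathbf{R}V_\leq(S_\alpha)=\bigcup_\alpha S_\alpha=S$ gives $\mathbf{R}V_\leq(S)=S$ (resp.\ with $\Pi$) by the union formula, so $S$ is right $(\pi\text{-})$inverse. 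Conversely, if $S$ is right $(\pi\text{-})$inverse then $S$, hence each $S_\alpha$, is $(\pi\text{-})$regular, and $\bigcup_\alpha\mathbf{R}V_\leq(S_\alpha)=\mathbf{R}V_\leq(S)=S$ together with the disjointness of the $S_\alpha$ forces $\mathbf{R}V_\leq(S_\alpha)=S_\alpha$ for every $\alpha$ (resp.\ with $\Pi$), so each $S_\alpha$ is right $(\pi\text{-})$inverse. The single delicate point is Step 2: in a complete semilattice decomposition Green's relations need not restrict to the components, so the equivalence ``$x\,\rc\,y$ in $S$ iff in $S_\alpha$'' cannot just be quoted; it has to be established for inverses (or idempotents), and its proof leans essentially on the defining inequalities $a\le aba$, $b\le bab$ (or $f\le f^2$) to absorb an arbitrary multiplier back into $S_\alpha$.
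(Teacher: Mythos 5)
Your proof is correct, and it is genuinely more than what the paper offers: the paper's entire proof of this theorem is the citation ``This follows from \cite{HJ1}'', deferring to the analogous statements proved there for $\mathbf{V}_\leq(S)$ and ($\pi$-)inverse semigroups, where the relevant relation is $\hc$ rather than $\rc$. Your argument supplies the details that the citation leaves implicit, and it correctly isolates the one place where the transfer from the $\hc$-setting of \cite{HJ1} to the $\rc$-setting here is not automatic: Green's relations on $S$ need not restrict to the components of a complete semilattice decomposition in general, so the equivalence ``$x\,\rc\,y$ in $S$ iff in $S_\alpha$'' must be proved for the elements at hand. Your padding trick --- replacing $x\leq yu$ by $x\leq y(ayu)$ via $y\leq yay$ and then invoking condition (4) of the decomposition to force $\alpha\gamma=\alpha$ --- does exactly this, and the component bookkeeping in Step 1 ($V_\leq(a^m)\subseteq S_\alpha$, $(\pi$-$)$regularity passing both ways via $sa^ms\in S_\alpha$) is the same mechanism one finds in the Cao--Xinzhai/Kehayopulu tradition that \cite{HJ1} builds on. The payoff of your route is a self-contained verification that the cited result really does carry over with $\rc$ in place of $\hc$; the payoff of the paper's route is brevity at the cost of leaving that verification to the reader. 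One cosmetic remark: the paper's definition of $\mathbf{R}V_\leq(S)$ is vacuously satisfied by non-regular elements, so strictly speaking your identity $\mathbf{R}V_\leq(S_\alpha)=\mathbf{R}V_\leq(S)\cap S_\alpha$ should be read with the convention (used implicitly in Lemma 2.3 of the paper) that membership presupposes $V_\leq(a)\neq\phi$; your argument works under either reading, since $V_\leq(a)$ computed in $S$ and in $S_\alpha$ coincide.
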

\begin{proof}
This follows from \cite{HJ1}.
\end{proof}

\begin{Lemma}\label{ne53}
Let an ordered  semigroup $S$  be a nil-extension of a semigroup
$K$ of type $\tau$ and $\mathbf{V}_\leq(S)\neq \phi $. Then the
following statements hold in $S$.
\begin{itemize}
\item[(i)] For every  $a \in \mathbf{R}V_\leq(S), \;a \in K$.
\item[(ii)]For every $\lc$-class of $S$ that contains an $a \in
\mathbf{R}V_\leq(S)$ is a subset of $K$.
\end{itemize}
\end{Lemma}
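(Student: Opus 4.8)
The plan is to obtain (i) directly from the nil-extension hypothesis together with the fact that $K$ is a downward-closed ideal of $S$, and then to deduce (ii) from (i) almost for free. Throughout I use three facts: $K$ is an ideal, hence $SK\subseteq K$ and $KS\subseteq K$; $K$ is downward closed, $(K]=K$, so $x\le k\in K$ forces $x\in K$; and (by the Lemma of \cite{Cao 2000} recalled above) $S$ being a nil-extension of $K$ means that for each $s\in S$ there is $m\in\mathbb{N}$ with $s^m\in K$.

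For (i), let $a\in\mathbf{R}V_\leq(S)$. Since such an $a$ admits an ordered inverse, fix $b\in V_\leq(a)$, so $a\le aba$ and $b\le bab$. I claim
\[
a\;\le\;(ab)^n\,a\,(ba)^n\qquad\text{for every }n\ge 1 .
\]
Indeed, multiplying $b\le bab$ on the left and right by $a$ gives $aba\le a(bab)a=(ab)\,a\,(ba)$, so $a\le(ab)\,a\,(ba)$; and multiplying a known inequality on the left by $ab$ and on the right by $ba$ yields $(ab)^k a(ba)^k\le(ab)^{k+1}a(ba)^{k+1}$, whence the chain $a\le(ab)a(ba)\le(ab)^2a(ba)^2\le\cdots$ gives the claim. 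Now apply the nil-extension hypothesis to the element $ab\in S$: there is $n$ with $(ab)^n\in K$. Since $K$ is an ideal, $(ab)^n a(ba)^n\in K$; and since $a\le(ab)^n a(ba)^n$ and $(K]=K$, we conclude $a\in K$.

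For (ii), let $L$ be an $\lc$-class of $S$ containing some $a\in\mathbf{R}V_\leq(S)$, and let $c\in L$ be arbitrary. Then $L(c)=L(a)$, so $c\in L(c)=L(a)=(a\cup Sa]$; that is, $c\le a$ or $c\le sa$ for some $s\in S$. By (i) we have $a\in K$, and since $K$ is a left ideal also $sa\in Sa\subseteq K$. In either case $c$ lies below an element of $K$, so $c\in K$. Hence $L\subseteq K$. (The argument uses only $c\in L(a)$, which is immediate from $L(c)=L(a)$ and $c\in L(c)$.)

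There is little genuine difficulty here: the only point needing a moment's care is that the iteration in (i) is a valid chain of inequalities — each step is obtained by multiplying a known inequality on the left and right by fixed elements, which the compatibility axioms permit. Once $a\le(ab)^n a(ba)^n$ is established, (i), and then (ii), follow quickly. Note that the hypothesis ``type $\tau$'' on $K$ is never used, so (i) and (ii) hold for the nil-extension of an arbitrary ideal $K$; and $\mathbf{V}_\leq(S)\neq\phi$ plays no role beyond the understanding that membership in $\mathbf{R}V_\leq(S)$ entails $V_\leq(a)\neq\phi$, which is what allowed the choice of $b$ above.
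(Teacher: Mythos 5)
Your proof is correct, and it is worth noting that the paper itself supplies no argument here — it simply defers to \cite{HJ1} — so your self-contained derivation is a genuine addition rather than a retracing. The key chain $a\le (ab)a(ba)\le (ab)^2a(ba)^2\le\cdots$ is valid (each step is left- and right-multiplication of a known inequality by fixed elements), and combining $(ab)^n\in K$ with $K$ being an ideal satisfying $(K]=K$ does give $a\in K$; part (ii) then follows exactly as you say from $c\in L(c)=L(a)=(a\cup Sa]$ together with $SK\subseteq K$. Your two side remarks are also apt and worth keeping: the ``type $\tau$'' hypothesis is indeed never used, and — more importantly — the set $\mathbf{R}V_\leq(S)$ as literally defined in the paper would vacuously contain every element with $V_\leq(a)=\emptyset$, which would make the lemma false; your reading that membership in $\mathbf{R}V_\leq(S)$ presupposes $V_\leq(a)\neq\emptyset$ is the only one under which the statement holds, and your proof makes explicit exactly where that regularity is used (namely, in choosing $b$ and hence the element $ab$ whose power lands in $K$). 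The one cosmetic point: you could shorten (i) slightly by iterating $a\le aba$ directly, but the version you wrote is clean and nothing in it fails.
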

\begin{proof}
This follows from \cite{HJ1}.

\end{proof}

Following \cite{AJ1} we know that if $S$ is a $\pi$-inverse
ordered semigroup then it is right $\pi$-inverse but not
conversely. In the next theorem we describe ordered semigroups
which are nil-extensions of  simple and right $\pi$-inverse
ordered semigroups and show that $S$ is $\pi$-inverse if and only
if it is right $\pi$-inverse in a $t$-Archimedean ordered
semigroup. Also we prove that in a right $\pi$-inverse ordered
semigroup, $S$ is $l$-Archimedean if and only if it is
$t$-Archimedean.

\begin{Theorem}\label{1005}
The following conditions on an ordered semigroup $S$ are
equivalent:
\begin{itemize}
\item[(i)]

$S$ is a nil-extension of  a left simple and right $\pi$-inverse
ordered semigroup;

\item[(ii)]

$S$ is right $\pi$-inverse and $l$-Archimedean ordered semigroup;

\item[(iii)]

$S$ is right $\pi$-inverse and $a\lc^* b$ for any $a, b\in S$;

\item[(iv)]

$S$ is right $\pi$-inverse and $e\lc^* f$ for any $e, f\in
E_\leq(S)$;

\item[(v)]

$S$ is $\pi$-regular and $e\hc^* f$ for any $e, f\in E_\leq(S)$;

\item[(vi)]

$S$ is $\pi$-regular and $a\hc^* b$ for any $a, b\in S$;

\item[(vii)]

$S$ is $\pi$-inverse and $t$-Archimedean ordered semigroup;

\item[(viii)]

$S$ is right $\pi$-inverse and $t$-Archimedean ordered semigroup;

\item[(ix)]

$S$ is a nil-extension of  a $t$-simple and right $\pi$-inverse
ordered semigroup.

\end{itemize}
\end{Theorem}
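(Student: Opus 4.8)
The plan is to first establish the equivalence of conditions $(\mathrm{i})$--$(\mathrm{vii})$ via the chain
\[(\mathrm{i})\Rightarrow(\mathrm{ii})\Rightarrow(\mathrm{iii})\Rightarrow(\mathrm{iv})\Rightarrow(\mathrm{v})\Rightarrow(\mathrm{vi})\Rightarrow(\mathrm{vii})\Rightarrow(\mathrm{i}),\]
and then to graft $(\mathrm{viii})$ and $(\mathrm{ix})$ onto this web by a few short implications. The point that makes the argument short is that conditions $(\mathrm{v})$, $(\mathrm{vi})$, $(\mathrm{vii})$ are \emph{verbatim} conditions $(\mathrm{v})$, $(\mathrm{vi})$, $(\mathrm{vii})$ of Theorem~\ref{74}, so the segment $(\mathrm{v})\Rightarrow(\mathrm{vi})\Rightarrow(\mathrm{vii})$ needs no proof; moreover, by Theorem~\ref{74}, $(\mathrm{vii})$ is equivalent to ``$S$ is a nil-extension of a $t$-simple and $\pi$-inverse ordered semigroup'', which will carry most of the weight at the two ends of the cycle. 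Hence the genuine content of the statement is concentrated in the bridging step $(\mathrm{iv})\Rightarrow(\mathrm{v})$, where the a priori weaker hypothesis ``right $\pi$-inverse'' is upgraded to $\hc^*$-triviality of idempotents and thence, through Theorem~\ref{74}, to full $\pi$-inverseness.

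I would dispatch $(\mathrm{iv})\Rightarrow(\mathrm{v})$ first, as the heart of the matter: a right $\pi$-inverse ordered semigroup is $\pi$-regular by definition, and if $e\lc^* f$ for all $e,f\in E_\leq(S)$, then Theorem~\ref{500} yields $e\rc^* f$ for all such $e,f$, hence $e\hc^* f$ for all such $e,f$, which is $(\mathrm{v})$; then $(\mathrm{v})\Rightarrow(\mathrm{vi})\Rightarrow(\mathrm{vii})$ is Theorem~\ref{74}. For $(\mathrm{i})\Rightarrow(\mathrm{ii})$, if $S$ is a nil-extension of a left simple, right $\pi$-inverse ordered semigroup $K$, then every $a\in S$ has a power in $K$ (by the quoted lemma from \cite{Cao 2000}), $\pi$-regularity passes from $K$ to $S$, and left simplicity of $K$ forces the principal left ideal of $a^k$ in $K$ --- which is $K$ itself --- to sit inside $(Sa]$ once $k\ge 2$, so $b^\ell\in K$ gives $b^\ell\in(Sa]$ and $S$ is $l$-Archimedean; that $S$ remains right $\pi$-inverse follows because a high power $a^m$ of any $a\in S$ lies in the ideal $K$, every ordered inverse of $a^m$ in $S$ already lies in $K$ (since $b\le ba^m b\in SKS\subseteq K$) and hence is an inverse of $a^m$ in $K$, while $\rc$ computed in $K$ is contained in $\rc$ computed in $S$ (immediate from $x\le yk$ with $k\in K^1\subseteq S^1$), so Theorem~\ref{15} applied inside $K$ finishes it. The step $(\mathrm{ii})\Rightarrow(\mathrm{iii})$ is the usual exponent bookkeeping (applying the $l$-Archimedean property to the least regular powers $a^m,b^n$ yields $L(a^m)=L(b^n)$, i.e. $a\lc^* b$, organized through Lemma~\ref{ne51} and the relation $a\mid b$ exactly as in \cite{HJ1}), and $(\mathrm{iii})\Rightarrow(\mathrm{iv})$ is trivial since idempotents are elements.

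To close the cycle and bring in the remaining conditions I would invoke Theorem~\ref{74} once more: from $(\mathrm{vii})$ one gets that $S$ is a nil-extension of a $t$-simple, $\pi$-inverse ordered semigroup $K$; since a $\pi$-inverse ordered semigroup is right $\pi$-inverse \cite{AJ1}, $K$ is itself right $\pi$-inverse, so $S$ is a nil-extension of a $t$-simple --- in particular left simple --- right $\pi$-inverse ordered semigroup. This gives at once $(\mathrm{vii})\Rightarrow(\mathrm{i})$ (closing the cycle) and $(\mathrm{vii})\Rightarrow(\mathrm{ix})$, while $(\mathrm{ix})\Rightarrow(\mathrm{i})$ is immediate because $t$-simple implies left simple. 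Finally $(\mathrm{viii})$ is attached by $(\mathrm{vii})\Rightarrow(\mathrm{viii})$ (``$\pi$-inverse $\Rightarrow$ right $\pi$-inverse'' again, with the $t$-Archimedean clause left alone) and $(\mathrm{viii})\Rightarrow(\mathrm{iv})$ ($t$-Archimedean implies $l$-Archimedean, then argue as in $(\mathrm{ii})\Rightarrow(\mathrm{iii})\Rightarrow(\mathrm{iv})$ keeping right $\pi$-inverse); alternatively $(\mathrm{viii})\Rightarrow(\mathrm{ix})$ can be obtained directly from Corollary~\ref{774}. As by-products one reads off the two facts announced before the statement: $(\mathrm{vii})\Leftrightarrow(\mathrm{viii})$ says $\pi$-inverse is the same as right $\pi$-inverse together with $t$-Archimedean, and $(\mathrm{ii})\Leftrightarrow(\mathrm{viii})$ says that in a right $\pi$-inverse ordered semigroup $l$-Archimedean and $t$-Archimedean coincide.

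The step I expect to require the most care is the passage of the right $\pi$-inverse property between $S$ and its kernel $K$ in $(\mathrm{i})\Rightarrow(\mathrm{ii})$ (and, for the alternative route, in $(\mathrm{viii})\Rightarrow(\mathrm{ix})$): one must be sure that for a power $a^m$ lying in $K$ both the set $V_\leq(a^m)$ of ordered inverses and the relation $\rc$ are the same whether computed in $K$ or in $S$, and this rests on $K$ being an ideal and $S$ a nil-extension --- Lemma~\ref{ne53}, which puts every element of $\mathbf{R}V_\leq(S)$ together with its whole $\lc$-class inside $K$, is the natural tool here. A second, purely technical point is the exponent juggling in $(\mathrm{ii})\Rightarrow(\mathrm{iii})$, namely reducing the Archimedean inclusions to an equality of principal left ideals of the least regular powers. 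Neither is conceptually deep; the one essential new ingredient is the use of Theorem~\ref{500} at the bridging step, which is exactly what allows ``right $\pi$-inverse'' to stand in for ``$\pi$-inverse'' throughout.
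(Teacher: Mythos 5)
Your proposal is correct and follows essentially the same route as the paper: the same cycle $(\mathrm{i})\Rightarrow\cdots\Rightarrow(\mathrm{vii})$ with Theorem~\ref{500} doing the work at $(\mathrm{iv})\Rightarrow(\mathrm{v})$, Theorem~\ref{74} handling the $\pi$-inverse segment, and the ideal argument $a'\leq a'a^ma'\in K$ transferring the right $\pi$-inverse property to the kernel. Your reorganization of how $(\mathrm{viii})$ and $(\mathrm{ix})$ are attached (closing the cycle at $(\mathrm{vii})\Rightarrow(\mathrm{i})$ and noting $(\mathrm{ix})\Rightarrow(\mathrm{i})$ is trivial since $t$-simple implies left simple) is only a cosmetic variant of the paper's linear chain $(\mathrm{vii})\Rightarrow(\mathrm{viii})\Rightarrow(\mathrm{ix})\Rightarrow(\mathrm{i})$.
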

\begin{proof}
$(i)\Rightarrow (ii)$: Let $S$ be a nil-extension of a left simple
and right $\pi$-inverse ordered semigroup $K$. Choose $a \in S$.
Then there exists $k \in \mathbb{N}$ such that $a^{k} \in K$.
Since $K$ is right $\pi$-inverse, for $a^{k}$ there exists $r \in
\mathbb{N}$ such that for any $x, y\in V_\leq (a^{m})\subseteq K$
implies $x\rc y$, where $m=kr$, by Theorem \ref{15}. Hence $S$ is
right $\pi$-inverse  ordered semigroup. Also $S$ is
$l$-Archimedean by Theorem \ref{74}.

$(ii)\Rightarrow (iii)$ and $(iii)\Rightarrow(iv) $:  These are
obvious.

$(iv)\Rightarrow (v)$: Since $S$ is right $\pi$-inverse so $S$ is
$\pi$-regular and $e\lc^* f$ implies $e\hc^* f$ for any $e, f\in
E_\leq(S)$, by Theorem \ref{500}. Hence $e\hc^* f$.

$(v)\Rightarrow (vi)$, $(vi)\Rightarrow(vii) $,
$(vii)\Rightarrow(viii)$ and $(viii)\Rightarrow(ix)$: This follows
from Theorem \ref{74}.

$(ix)\Rightarrow(i)$: Since $S$ is $t$-Archimedean so $S$ is a
nil-extension of a $t$-simple ordered semigroup $K$ by Corollary
\ref{774}. So $K$ is a left simple ordered semigroup. Let $a\in
K$. Since $S$ is right $\pi$-inverse, for $a\in S$ there exists $m
\in \mathbb{N}$ such that for any $a',a''\in V_\leq(a^m)$ implies
$a'\rc a''$. Now since $K$ is an ideal, so $a'a^ma'\in K$. Hence
$a'\leq a'a^ma'$ implies $a'\in K$. Similarly $a''\in K$. So
$a'\rc a''$ in $K$. Hence $K$ is a right $\pi$-inverse ordered
semigroup.

\end{proof}

\begin{Corollary}
The following conditions on an ordered semigroup $S$ are
equivalent:
\begin{itemize}
\item[(i)]

$S$ is a nil-extension of  a simple and right $\pi$-inverse
ordered semigroup;

\item[(ii)]

$S$ is right $\pi$-inverse and $e\jc^* f$ for all $e,f \in
E_\leq(S)$;

\item[(iii)]

$S$ is right $\pi$-inverse and $a\jc^* b$ for all $a,b \in S$;

\item[(iv)]

$S$ is right $\pi$-inverse and for all $a,b \in S$, there exists
$m \in \mathbb{N}$ such that $a^{m} \in (SbS]$;

\item[(v)]

$S$ is right $\pi$-inverse and Archimedean ordered semigroup.

\end{itemize}
\end{Corollary}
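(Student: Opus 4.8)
The plan is to prove the Corollary as a direct parallel to Theorem~\ref{1005}, replacing ``left simple'' by ``simple'' and the chain of $l$-Archimedean conditions by their $\jc^*$-analogues, with the Archimedean property playing the role of $l$-Archimedean. First I would establish $(i)\Rightarrow(ii)$: if $S$ is a nil-extension of a simple and right $\pi$-inverse ordered semigroup $K$, then exactly as in the proof of $(i)\Rightarrow(ii)$ of Theorem~\ref{1005} one uses Theorem~\ref{15} to pull the right $\pi$-inverse property of $K$ up to $S$ (if $a^k\in K$ and $r$ witnesses the right $\pi$-inverse property of $a^k$ in $K$, then $m=kr$ works in $S$, since an inverse $a'$ of $a^m$ satisfies $a'\leq a'a^ma'\in K$, so $a'\in K$); and the $e\jc^* f$ condition follows from Corollary~\ref{76} applied to $K$ together with the fact (Lemma~\ref{ne53}) that all ordered idempotents of a $\pi$-regular $S$ lie in $K$. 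For $(ii)\Rightarrow(iii)$ I would invoke that $S$ is $\pi$-regular (right $\pi$-inverse implies $\pi$-regular) and that $e\jc^* f$ for all idempotents forces $a\jc^* b$ for all $a,b$: given $a,b\in S$, pick regular powers $a^p, b^q$, then $a^p\in(a^pSa^p]$ gives an idempotent $\jc^*$-related to $a$ (e.g.\ take $e$ with $a^p\leq a^p x a^p$ and set $e$ an ordered idempotent in the $\hc^*$-class appropriately), likewise for $b$, and transitivity of $\jc^*$ closes the argument.

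Next, $(iii)\Rightarrow(iv)$ is essentially a restatement: $a\jc^* b$ means $a^m\jc b^n$ for the least regular powers, and $I(a^m)=I(b^n)$ unwinds via the definition of the principal ideal to give $b^n\leq x a^m y$ for suitable $x,y\in S^1$, hence (raising to a further power if needed to absorb any $S^1$ into $S$, or handling the $S^1$ cases separately) a power of $b$ lies in $(SaS]$, and symmetrically; one must be mildly careful with the $S^1$ versus $S$ distinction, but this is routine. Then $(iv)\Rightarrow(v)$ is the observation that $a^m\in(SbS]$ for all $a,b$ is precisely the definition of $S$ being Archimedean (every element has a power in the principal ideal generated by any other element). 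Finally $(v)\Rightarrow(i)$: an Archimedean ordered semigroup containing an intra-regular element is a nil-extension of a simple ordered semigroup $K$ (this is the standard structure theorem for Archimedean ordered semigroups, the simple-case analogue of Corollary~\ref{774}; since $S$ is $\pi$-regular it certainly has $\pi$-regular, hence intra $\pi$-regular, elements so $\mathbf{V}_\leq(S)\neq\phi$), and then one shows $K$ is right $\pi$-inverse by the same ideal argument as in $(ix)\Rightarrow(i)$ of Theorem~\ref{1005}: for $a\in K$, an inverse $a'$ of $a^m$ in $S$ satisfies $a'\leq a'a^ma'\in K$ so $a'\in K$, and the $\rc$-relation between two such inverses survives in $K$.

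The main obstacle I anticipate is the step $(v)\Rightarrow(i)$, specifically invoking the correct structural lemma for Archimedean ordered semigroups: Corollary~\ref{774} as stated is only for the $t$-simple / $t$-Archimedean case, so I either need to cite the analogous result (``$S$ is a nil-extension of a simple ordered semigroup iff $S$ is Archimedean and $Intra_\leq(S)\neq\phi$'') from the literature (\cite{Cao 2000} or \cite{ke2003}) or reprove it inline; I would prefer to cite it. A secondary, minor obstacle is confirming in $(iii)\Rightarrow(iv)$ and $(ii)\Rightarrow(iii)$ that powers and the passage between $\jc$ on regular powers and $\jc^*$ behave as expected — this mirrors the corresponding arguments in Theorem~\ref{74} and Corollary~\ref{76}, so I would lean on those. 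The cleanest write-up is the cycle $(i)\Rightarrow(ii)\Rightarrow(iii)\Rightarrow(iv)\Rightarrow(v)\Rightarrow(i)$, citing Corollary~\ref{76}, Lemma~\ref{ne53}, Theorem~\ref{15}, and the Archimedean structure theorem at the points indicated above.
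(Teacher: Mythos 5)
Your proposal follows essentially the same route as the paper: the cycle $(i)\Rightarrow(ii)\Rightarrow(iii)\Rightarrow(iv)\Rightarrow(v)\Rightarrow(i)$, with the same lifting of the right $\pi$-inverse property from $K$ to $S$ via Theorem~\ref{15} and Corollary~\ref{76} in the first step, the middle implications treated as routine, and $(v)\Rightarrow(i)$ handled by the Archimedean structure theorem (Theorem~3.3 of \cite{Cao 2000}, exactly the citation the paper uses) followed by the ideal argument showing inverses of $a^m$ fall into $K$. No substantive differences; your anticipated obstacle at $(v)\Rightarrow(i)$ is resolved precisely as you predicted.
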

\begin{proof}
$(i)\Rightarrow (ii)$: Let $S$ be a nil-extension of a simple and
right $\pi$-inverse ordered semigroup $K$. Choose $a \in S$. Then
there exists $k \in \mathbb{N}$ such that $a^{k} \in K$. Since $K$
is right $\pi$-inverse, for $a^{k}$ there exists $r \in
\mathbb{N}$ and for any $x, y\in K$ such that $x, y\in V_\leq
(a^{m})$ implies $x\rc y$, where $m=kr$. Hence $S$ is right
$\pi$-inverse  ordered semigroup. Also $e\jc^* f$ for all $e,f \in
E_\leq(S)$, by Corollary \ref{76}.

$(ii)\Rightarrow (iii)$, $(iii)\Rightarrow (iv)$ and
$(iv)\Rightarrow (v)$: These are obvious.

$(v)\Rightarrow (i)$: Since $S$ is an Archimedean ordered
semigroup, so $S$ is nil-extension of the simple ordered semigroup
$K$ by Theorem $3.3$ of \cite{Cao 2000}. Let $a\in K$. Since $S$
is right $\pi$-inverse, for $a\in S$ there exists $m \in
\mathbb{N}$ such that for any $a',a''\in V_\leq(a^m)$ implies
$a'\rc a''$. Now since $K$ is an ideal, so $a'a^ma'\in K$. Hence
$a'\leq a'a^ma'$ implies $a'\in K$. Similarly $a''\in K$. So
$a'\rc a''$ in $K$. Hence $K$ is a right $\pi$-inverse ordered
semigroup

\end{proof}

\begin{Corollary}
An ordered semigroup $S$ is a nil-extension of  a right inverse
ordered semigroup if and only if the following conditions hold in
$S$:
\begin{itemize}
\item[(i)] $S$ is right $\pi$-inverse; \item[(ii)] for  $a \in S$
and $b \in \mathbf{R}V_\leq(S)$  such that   $a \leq ba$ implies
that $a \in \mathbf{R}V_\leq(S)$; \item[(iii)] for  $a \in S$ and
$b \in \mathbf{R} V_\leq(S)$ such that   $a\leq ab$ implies that
$a \in \mathbf{R}V_\leq(S)$; \item[(iv)] for $a \in S$ and $b \in
\mathbf{R} V_\leq(S)$ such that $a\leq b$ implies that $a
\in\mathbf{R} V_\leq(S)$.
\end{itemize}
\end{Corollary}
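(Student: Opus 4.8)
The plan is to take $K:=\mathbf{R}V_\leq(S)$ as the candidate kernel in both directions, reducing everything to the behaviour of $\mathbf{R}V_\leq(S)$ under $\leq$ and under multiplication by its own elements; Theorem \ref{15} and Lemma \ref{ne53} supply the technical input.

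\emph{Necessity.} Suppose $S$ is a nil-extension of a right inverse ordered semigroup $K$. Since every element of $S$ has a power in the ($\pi$-regular) ideal $K$, $S$ is $\pi$-regular, and in particular $\mathbf{V}_\leq(S)\neq\phi$. Given $a\in S$, pick $k$ with $a^{k}\in K$; every $x\in V_\leq(a^{k})$ satisfies $x\leq xa^{k}x\in K$, hence $x\in(K]=K$, so the inverses of $a^{k}$ in $S$ are precisely its inverses inside $K$, and since $K$ is right inverse these are $\rc$-related in $K$ and hence in $S$ (routinely, $R_K(x)\subseteq R_S(x)$ for $x\in K$). By Theorem \ref{15}, $S$ is right $\pi$-inverse, which is (i). The same observation shows $K\subseteq\mathbf{R}V_\leq(S)$, while Lemma \ref{ne53}(i) gives $\mathbf{R}V_\leq(S)\subseteq K$, so $K=\mathbf{R}V_\leq(S)$. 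Then (ii)--(iv) follow at once: if $b\in\mathbf{R}V_\leq(S)=K$ and $a\leq ba$, or $a\leq ab$, or $a\leq b$, then respectively $ba\in K$, $ab\in K$, $b\in K$ (because $K$ is an ideal), so $a\in(K]=K=\mathbf{R}V_\leq(S)$.

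\emph{Sufficiency.} Assume (i)--(iv). I would first show $E_\leq(S)\subseteq\mathbf{R}V_\leq(S)$: for $e\in E_\leq(S)$, (i) and Theorem \ref{15} provide $m$ with $e^{m}\in\mathbf{R}V_\leq(S)$, and from $e\leq e^{2}\leq\cdots\leq e^{m}$ condition (iv) gives $e\in\mathbf{R}V_\leq(S)$. Next take an arbitrary $a\in S$. If $V_\leq(a)=\phi$, then $a\in\mathbf{R}V_\leq(S)$ vacuously. Otherwise fix $w\in V_\leq(a)$; from $a\leq awa$, multiplying on the right by $w$ yields $aw\leq(aw)^{2}$, so $aw\in E_\leq(S)\subseteq\mathbf{R}V_\leq(S)$, and then $a\leq awa=(aw)a$ with $aw\in\mathbf{R}V_\leq(S)$ gives $a\in\mathbf{R}V_\leq(S)$ by (ii). Hence $\mathbf{R}V_\leq(S)=S$; that is, $S$ is itself right inverse, so it is (trivially) a nil-extension of the right inverse ordered semigroup $K=\mathbf{R}V_\leq(S)=S$.

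The step that in principle carries the difficulty is verifying that the chosen kernel is an ideal, but conditions (ii) and (iv) together with right $\pi$-inverseness -- via the ordered-idempotent identity $a\leq awa\Rightarrow aw\in E_\leq(S)$ -- collapse this by forcing $\mathbf{R}V_\leq(S)=S$, so the ideal property becomes automatic. The only residual bookkeeping is the transfer of $\rc$ between $K$ and $S$ used in the necessity part: $\rc$ in $K$ implies $\rc$ in $S$ trivially, and for inverses of an element of $K$ the reverse implication follows by multiplying a relation $x\leq ys$ (with $s\in S^{1}$) on the right by $ax\in K$. I anticipate no further obstacle.
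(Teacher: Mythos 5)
Your necessity argument is sound and essentially the paper's (the paper shows $b\in K$ directly from $b\leq b(zb)^{n}$ rather than citing Lemma \ref{ne53}, but the content is the same). The sufficiency direction, however, has a fatal gap at the step ``if $V_\leq(a)=\phi$, then $a\in\mathbf{R}V_\leq(S)$ vacuously,'' and in the conclusion you draw from it. If you read the definition of $\mathbf{R}V_\leq(S)$ that literally, then indeed $\mathbf{R}V_\leq(S)=S$, but this does \emph{not} make $S$ right inverse: a right inverse ordered semigroup must be regular, and nothing in (i)--(iv) forces $S$ to be regular. Indeed your own necessity argument shows that \emph{every} nil-extension of a right inverse ordered semigroup satisfies (i)--(iv), and such an $S$ (e.g.\ one with a genuinely nilpotent top part) is not regular, so your sufficiency proof would prove a false statement. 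If instead one reads $\mathbf{R}V_\leq(S)\subseteq Reg_\leq(S)$ --- which is what the paper intends, since Lemma \ref{ne53}(i) and the necessity proof both use that members of $\mathbf{R}V_\leq(S)$ are regular --- then the vacuous-membership claim is simply false and your argument never places the non-regular elements anywhere.

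What is actually needed (and what the paper does) is to take $T=\mathbf{R}V_\leq(S)$ as the \emph{kernel}, not to show $T=S$: one checks that every element of $S$ has a power in $T$ (from (i) together with $\pi$-regularity, so that $a^{m}\in Reg_\leq(S)$ with all inverses $\rc$-related), that $T$ is an ideal with $(T]=T$ (for $a\in T$ regular, $a\leq a(ha)^{n}$ with $(ha)^{m_1}\in T$, so $sa\leq sa(ha)^{m_1}$ gives $sa\in T$ by (iii), and symmetrically $as\in T$ by (ii); downward closure is (iv)), and that $T$ is right inverse. Your computations that $E_\leq(S)\subseteq\mathbf{R}V_\leq(S)$ and $Reg_\leq(S)\subseteq\mathbf{R}V_\leq(S)$ are correct and would be useful ingredients here, but as written the proof stops one step short of building the nil-extension and instead asserts a conclusion ($S$ itself right inverse) that is strictly stronger than, and generally inconsistent with, the statement being proved.
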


\begin{proof}
First suppose that $S$ is a nil-extension of a right inverse
ordered semigroup $K$.

$(i)$. Let  $ a \in S$.  Then  there is $m \in \mathbb{N}$ such
that $a^{m}\in K$. Since $K$ is right inverse, so there are
$x,y\in K$ such that for any $x, y\in V_\leq(a^m)$ implies $x\rc
y$. Thus $S$ is right $\pi$-inverse.

$(ii)$. Let $b \in \mathbf{R}V_\leq(S)$ and $a \in S$ such that $a
\leq ba$. Then  since $b \in Reg_\leq (S),\\ \;\textrm{there is}
\;z\in S$ such that $b\leq b(zb)^{n} \;\textrm{for all} \;n \in
\mathbb{N}$. Let $n_{1}\in \mathbb{N}$ such that $(zb)^{n_{1}} \in
K$. Then $b(zb)^{n_{1}} \in K$, which implies  $b\in K$ and so
$ba\in K$ and finally $a\in K$. Since $K$ is a right inverse
ordered semigroup, $a \in \mathbf{R}V_\leq (S)$.

$(iii)$. This is similar to $(ii)$.

$(iv)$. Let $a \in S$ and $b \in \mathbf{R}V_\leq (S)$ such that
$a \leq b$. Clearly $b \in Reg_\leq (S)$, and so for some  $z \in
S, \;b \leq b(zb)^n$, which holds for all $n \in \mathbb{N}$.
Since $S$ is nil-extension of $K$ there is  $m \in \mathbb{N}$
such that $(zb)^m \in K$ and so $b\in K$. Thus $a \in K$.
Therefore $a \in \mathbf{R} V_\leq (S)$.

Conversely, assume that given conditions  hold in $S$. Let $a\in
S$ be arbitrary. Then by (i) there exists $m\in \mathbb{N}$ such
that for any $x, y\in S$, $x, y\in V_\leq(a^m)$ implies $x\rc y$.
So $\mathbf{R}V_\leq(S)\neq \phi$. Say $T=\mathbf{R}V_\leq(S)$.
Thus for each $a \in S, \;\textrm{there exists} \;m\in \mathbb{N}$
such that for any $x, y\in S$, $x,y\in V_\leq(a^m)$ implies $x\rc
y$. Now choose $s\in S$ and $a\in T$. Then $a \in Reg_\leq (S)$,
there is $h \in S$ such that $a \leq a (ha)^n \;\textrm{for all}
\;n \in \mathbb{N}$. Let $m_{1}\in \mathbb{N}$ such that
$(ha)^{m_1} \in T$. So $sa \leq sa (ha)^{m_1} $ implies that $sa
\in V_\leq(S)=T$, by (iii). Similarly $as \in T$ follows from
(ii).

Now consider $a \in S$ and $b\in T$ such that $a\leq b$. Then  by
(iv) $a \in T$.  Also $T$ is a right inverse ordered semigroup.
Hence $S$ is nil-extension of a right inverse ordered semigroup
$T$.
\end{proof}

\section{Complete semilattice of nil-extensions of right $\pi$-inverse ordered  semigroups}
In this section we characterize  complete semilattice
decompositions of  nil-extensions of  right $\pi$-inverse ordered
semigroups.

\begin{Corollary}\label{1114}
Let $S$ be an ordered semigroup. Then the following conditions are
equivalent on $S$:
\begin{itemize}
\item[(i)]

$S$ is a  complete semilattice  of nil-extensions of simple and
right $\pi$-inverse ordered semigroups;

\item[(ii)]

$S$ is a complete semilattice of nil-extensions of simple
semigroups and $\Pi Intra_\leq(S)= \Pi \mathbf{R}{V}_\leq (S)$;

\item[(iii)]

$S$ is right $\pi$-inverse and is a complete semilattice of
Archimedean ordered semigroups.

\end{itemize}
\end{Corollary}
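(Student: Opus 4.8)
The plan is to push all three conditions down to the semilattice components and to reuse two facts already available: the Corollary above characterising nil-extensions of simple and right $\pi$-inverse ordered semigroups (in particular that such a nil-extension is the same thing as a right $\pi$-inverse Archimedean ordered semigroup), and Theorem~\ref{ne511}(iii), which says that for a complete semilattice $S=\bigcup_{\alpha\in Y}S_\alpha$ of subsemigroups, $S$ is right $\pi$-inverse precisely when every $S_\alpha$ is. A third, elementary ingredient I would use is that a simple ordered semigroup $K$ is intra-regular: $(KxK]=K$ for every $x\in K$, so taking $x=a^{2}$ gives $a\in(Ka^{2}K]$ for all $a\in K$.

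First I would settle $(i)\Leftrightarrow(iii)$. Fix the given complete semilattice decomposition $S=\bigcup_{\alpha\in Y}S_\alpha$. Applying that Corollary to each $S_\alpha$, the assertion ``every $S_\alpha$ is a nil-extension of a simple and right $\pi$-inverse ordered semigroup'' becomes ``every $S_\alpha$ is right $\pi$-inverse and Archimedean''; by Theorem~\ref{ne511}(iii) the first half of this is equivalent to $S$ being right $\pi$-inverse, and the second half says exactly that $S$ is a complete semilattice of Archimedean ordered semigroups. This gives $(i)\Leftrightarrow(iii)$, the same decomposition serving both conditions.

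Next I would handle $(i)\Rightarrow(ii)$ and $(ii)\Rightarrow(i)$. For $(i)\Rightarrow(ii)$: a nil-extension of a simple and right $\pi$-inverse ordered semigroup is in particular a nil-extension of a simple one, so the same decomposition witnesses the first clause of (ii); and $(i)\Leftrightarrow(iii)$ gives that $S$ is right $\pi$-inverse, hence $\pi$-regular, so every $a\in S$ has a power $a^{m}\in Reg_\leq(S)$ all of whose inverses are $\rc$-related, whence $\Pi\mathbf{R}V_\leq(S)=S$; moreover each component is a nil-extension of a simple, hence intra-regular, kernel, so every element of $S$ has a power in such a kernel and is therefore intra-$\pi$-regular, giving $\Pi Intra_\leq(S)=S$, so the two sets coincide. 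For $(ii)\Rightarrow(i)$: the same intra-regularity remark again yields $\Pi Intra_\leq(S)=S$, so the hypothesis forces $\Pi\mathbf{R}V_\leq(S)=S$; hence every $a\in S$ has a power $a^{m}\in Reg_\leq(S)$ whose inverses are pairwise $\rc$-related, which by Theorem~\ref{15} means $S$ is right $\pi$-inverse. Then Theorem~\ref{ne511}(iii) makes each $S_\alpha$ right $\pi$-inverse, and, writing $K_\alpha$ for the simple ideal of which $S_\alpha$ is a nil-extension, the argument used in the proof of that Corollary shows that for $a\in K_\alpha$ any $a',a''\in V_\leq(a^{m})$ lie in $K_\alpha$ (since $a'\leq a'a^{m}a'\in K_\alpha$) and are $\rc$-related there; thus $K_\alpha$ is simple and right $\pi$-inverse and $S_\alpha$ is its nil-extension, which is (i).

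The step I expect to be the main obstacle is the careful handling of $\Pi\mathbf{R}V_\leq(S)$ against the semilattice structure: one must be sure that membership in this set really delivers $\pi$-regularity together with $\rc$-uniqueness of the relevant inverses, and that this property transfers correctly both to the components $S_\alpha$ and to their simple kernels $K_\alpha$, so that $\rc$ computed in $S$, in $S_\alpha$ and in $K_\alpha$ may be used interchangeably on the inverse elements in play. Everything else is routine bookkeeping once that Corollary and Theorem~\ref{ne511} are invoked; without them one would have to redevelop the whole nil-extension-of-simple theory here.
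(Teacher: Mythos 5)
Your proposal is correct in substance, but it organizes the argument differently from the paper. The paper runs the single cycle $(i)\Rightarrow(ii)\Rightarrow(iii)\Rightarrow(i)$, working directly with the sets $\Pi Intra_\leq(S)$ and $\Pi\mathbf{R}V_\leq(S)$ and, for $(iii)\Rightarrow(i)$, invoking Theorem 3.3 of Cao--Xu and then chasing the inverses of $a^m$ into the kernel $K_\alpha$ by hand using completeness of the congruence. You instead settle $(i)\Leftrightarrow(iii)$ first by pushing everything down to the components: the Corollary following Theorem~\ref{1005} converts ``nil-extension of a simple and right $\pi$-inverse ordered semigroup'' into ``right $\pi$-inverse and Archimedean'' componentwise, and Theorem~\ref{ne511}(iii) transfers right $\pi$-inverseness between $S$ and the $S_\alpha$. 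That is a genuinely cleaner route for this equivalence, and your observation that under (i) both $\Pi Intra_\leq(S)$ and $\Pi\mathbf{R}V_\leq(S)$ are all of $S$ (simple kernels are intra-regular, so every element has an intra-regular power; right $\pi$-inverseness gives the other set) yields the equality in (ii) more transparently than the paper's two separate inclusions.

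One detail in your $(ii)\Rightarrow(i)$ needs to be patched. You assert that for $a\in K_\alpha$ and $a'\in V_\leq(a^m)$ one has $a'\le a'a^ma'\in K_\alpha$, borrowing the argument from the Corollary after Theorem~\ref{1005}; but there $K$ is an ideal of $S$ itself, whereas here $K_\alpha$ is an ideal only of $S_\alpha$, so $a'a^ma'\in K_\alpha$ is not automatic. You must first place $a'$ in $S_\alpha$, and this is exactly where completeness of the semilattice congruence $\rho$ enters: from $a'\le a'a^ma'$ and $a^m\le a^ma'a^m$ one gets $(a')_\rho=(a')_\rho(a^m)_\rho=(a^m)_\rho$, hence $a'\in S_\alpha$, and only then does $a'a^ma'\in S_\alpha K_\alpha S_\alpha\subseteq K_\alpha$ force $a'\in K_\alpha$. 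The paper's proof of $(iii)\Rightarrow(i)$ carries out precisely this step. You flag this transfer issue as ``the main obstacle'' but do not actually resolve it, so it should be written out (together with the routine check that $\rc$ computed in $K_\alpha$ and in $S$ agree on these inverses). Finally, your inference from $\Pi\mathbf{R}V_\leq(S)=S$ to ``$S$ is right $\pi$-inverse'' via Theorem~\ref{15} tacitly assumes the relevant powers $a^m$ are regular, which the literal definition of $\Pi\mathbf{R}V_\leq(S)$ does not guarantee; the paper makes the same silent identification in its $(ii)\Rightarrow(iii)$, so you are on equal footing there, but it is worth extracting the regularity explicitly from the simple kernels.
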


\begin{proof}
$(i)\Rightarrow (ii)$: Clearly  $S$ is a complete semilattice of
semigroups $\{S_\alpha \}_{\alpha \in Y}$, where $S_\alpha$ is a
nil-extension of  simple ordered semigroup $K_\alpha$ and $\rho$
be the corresponding  complete semilattice congruence on $S$. So
we need only to show that $\Pi Intra_\leq (S)= \Pi
\mathbf{R}{V}_\leq (S)$. For this, let $a \in \Pi
\mathbf{R}{V}_\leq (S)$. Then clearly there are  $m \in \mathbb{N}
\;\textrm{and} \;\alpha \in Y$ such that $a^m \in K_\alpha $ and
$a^m \in (K_\alpha a^{2m} K_\alpha]$. Thus $a \in \Pi
Intra_\leq(S)$.

Also let $b \in \Pi Intra_\leq (S)$. Then there are  $x, y \in S$
and $\gamma \in Y$ such that $b^m \leq xb^{2m} y$ and  $b  \in
S_\gamma$. Let  $S_\gamma$ is a nil-extension of simple and right
$\pi$-inverse ordered  semigroup $K_\gamma$.  Now $b^m \leq
(xb^m)^n b^m y^n$ for all $n \in \mathbb{N}$. Also by completeness
of $\rho$ we have that $(b^m)_\rho = (b^mxb^{2m}y)_\rho= (xb^m
xb^m y)_\rho=(xb^m)_\rho (xb^my)_\rho= (xb^m)_\rho (b^m)_\rho=
(xb^m)_\rho. $ Thus  $xb^m \in S_\gamma$. So  there is $m_1
\in\mathbb{N}$ such that $(xb^m)^{m_1} \in K_\gamma$. Thus
$b^{m_1} \in K_\gamma$. Since $K_\gamma$ is right $\pi$-inverse it
follows that $b \in \Pi \mathbf{R}{V}_\leq (S)$. Hence $\Pi
Intra_\leq (S)= \Pi \mathbf{R}{V}_\leq (S)$.

$(ii)\Rightarrow (iii)$:  Suppose $S$ is a complete semilattice of
semigroups $S_\alpha(\alpha \in Y)$, where $S_\alpha$ is a
nil-extension of  simple ordered semigroup $K_\alpha$ and $\Pi
Intra_\leq (S)= \Pi \mathbf{R}{V}_\leq (S)$. Let $a\in S$. Then
there are $m \in \mathbb{N}$ and $\alpha \in Y$ such that
$a^{m}\in K_\alpha$. Since each $K_\alpha$ is simple, for
$a^{m}\in K_\alpha$ there exists $r \in \mathbb{N}$ such that
$(a^m)^r \in (K_\alpha (a^m)^{2r} K_\alpha]$. Thus $ a\in \Pi
Intra_\leq (S)= \Pi \mathbf{R}{V}_\leq (S)$. Hence $S$ is right
$\pi$-inverse ordered semigroup. Also  $S$ is a complete
semilattice of Archimedean ordered semigroups, by Corollary 3.9 of
\cite{Cao 2000}.

$(iii) \Rightarrow (i)$: Suppose that the condition (iii) holds.
Then $S$ is a complete semilattice of nil-extensions of simple
semigroups, by Theorem $3.3$ of \cite{Cao 2000}.  Suppose that $S$
is a complete semilattice of semigroups $S_\alpha (\alpha \in Y)$,
where $S_\alpha$ is nil-extension of simple semigroup $K_\alpha$.
Let $a \in K_\alpha$. Since $S$ is $\pi$-inverse, there is $m \in
\mathbb{N}$ such that for any $z, y\in V_\leq(a^m)$ implies that
$z\rc y$, where $z, y\in S$. Now $a^m \leq a^m z a^m$ implies by
completeness of $\rho$, $(a^m)_\rho= (za^m)_\rho$ and so $a^m ,
za^m \in S_\alpha$. Now $a^m \leq a^m za^m$ implies that $a^m \leq
a^m (za^m z) a^m$. Since $S_\alpha$ is a nil-extension of
$K_\alpha$, $K_\alpha$ is an ideal of $S_\alpha$. Thus $za^m z \in
K_\alpha$. Now $z\leq za^mz$ implies by completeness of $\rho$,
$(z)_\rho= (za^mz)_\rho$. So $z\in K_\alpha$. Similarly $y\in
K_\alpha$. Hence $K_\alpha$ is right $\pi$-inverse and so $S$ is
complete semilattice of nil-extensions simple and right
$\pi$-inverse ordered semigroups.

\end{proof}

\begin{Corollary}
Let $S$ be an ordered semigroup. Then the following conditions are
equivalent on $S$:
\begin{itemize}
\item[(i)]

$S$ is a  complete semilattice  of nil-extensions of left simple
and right $\pi$-inverse ordered semigroups;

\item[(ii)]

$S$ is a complete semilattice of nil-extensions of left simple
semigroups and $\Pi Intra_\leq(S)= \Pi \mathbf{R}{V}_\leq (S)$;

\item[(iii)]

$S$ is right $\pi$-inverse and is a complete semilattice of
$l$-Archimedean ordered semigroups.

\end{itemize}
\end{Corollary}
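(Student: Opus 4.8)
The plan is to run the three implications $(i)\Rightarrow(ii)\Rightarrow(iii)\Rightarrow(i)$ exactly as in the proof of Corollary~\ref{1114}, replacing throughout ``simple'' by ``left simple'', ``Archimedean'' by ``$l$-Archimedean'', Corollary~\ref{76} by Theorem~\ref{1005}, and Theorem~$3.3$ and Corollary~$3.9$ of \cite{Cao 2000} by their one-sided (left simple / $l$-Archimedean) analogues. Alternatively, $(i)\Leftrightarrow(iii)$ can be obtained at once: if $S$ is a complete semilattice of subsemigroups $\{S_\alpha\}_{\alpha\in Y}$, then by the equivalence $(i)\Leftrightarrow(ii)$ of Theorem~\ref{1005} each $S_\alpha$ is a nil-extension of a left simple and right $\pi$-inverse ordered semigroup if and only if each $S_\alpha$ is right $\pi$-inverse and $l$-Archimedean, while by Theorem~\ref{ne511}(iii) the family $\{S_\alpha\}$ consists of right $\pi$-inverse semigroups precisely when $S$ itself is right $\pi$-inverse; hence $(i)$ holds iff $S$ is right $\pi$-inverse and is a complete semilattice of $l$-Archimedean ordered semigroups, which is $(iii)$.

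For $(i)\Rightarrow(ii)$ I would first observe that $(i)$ makes $S$ a complete semilattice of nil-extensions of left simple semigroups, so only $\Pi Intra_\leq(S)=\Pi\mathbf{R}V_\leq(S)$ needs proof. The inclusion $\supseteq$ is immediate; in fact both sides equal $S$, since for any $a\in S$ we have $a^m\in K_\alpha$ for some $m$ and $\alpha$, and left simplicity of $K_\alpha$ gives $a^m\in(K_\alpha a^{2m}]\subseteq(Sa^{2m}S]$, so $a\in\Pi Intra_\leq(S)$. For $\subseteq$ I would copy the computation from $(i)\Rightarrow(ii)$ of Corollary~\ref{1114}: given $b\in\Pi Intra_\leq(S)$ with $b^m\le xb^{2m}y$ and $b\in S_\gamma$, the relation $b^m\le(xb^m)^{n}b^m y^{n}$ together with completeness of $\rho$ yields $(b^m)_\rho=(xb^m)_\rho$, hence $xb^m\in S_\gamma$, hence $(xb^m)^{m_1}\in K_\gamma$ for some $m_1$; since $K_\gamma$ is an ideal of $S_\gamma$ this forces $b^m\in K_\gamma$, and since $K_\gamma$ is right $\pi$-inverse and an ideal of $S_\gamma$ (so that every ordered inverse in $S$ of a suitable power of that element already lies in $K_\gamma$, exactly as in $(ix)\Rightarrow(i)$ of Theorem~\ref{1005}), we conclude $b\in\Pi\mathbf{R}V_\leq(S)$. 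For $(ii)\Rightarrow(iii)$: for $a\in S$ pick $m,\alpha$ with $a^m\in K_\alpha$; left simplicity gives $a^m\in(K_\alpha a^{2m}]$, so $a\in\Pi Intra_\leq(S)=\Pi\mathbf{R}V_\leq(S)$ and $S$ is right $\pi$-inverse, while $S$ is a complete semilattice of $l$-Archimedean ordered semigroups by the $l$-analogue of Corollary~$3.9$ of \cite{Cao 2000}. For $(iii)\Rightarrow(i)$: the $l$-analogue of Theorem~$3.3$ of \cite{Cao 2000} makes $S$ a complete semilattice of semigroups $S_\alpha$ each a nil-extension of a left simple semigroup $K_\alpha$; fixing $a\in K_\alpha$ and, using that $S$ is right $\pi$-inverse, a suitable $m$ with $z\rc y$ for all $z,y\in V_\leq(a^m)$, the identities $a^m\le a^m za^m$ and $z\le za^m z$ together with completeness of $\rho$ and the fact that $K_\alpha$ is an ideal of $S_\alpha$ force $z,y\in K_\alpha$, so $K_\alpha$ is right $\pi$-inverse; this is verbatim the argument of $(iii)\Rightarrow(i)$ of Corollary~\ref{1114}.

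The only ingredient genuinely new beyond Corollary~\ref{1114} is the one-sided analogues of Cao's structure theorems: that a nil-extension of a left simple ordered semigroup is $l$-Archimedean, and that an $l$-Archimedean ordered semigroup with a nonempty set of intra-regular elements is a nil-extension of a left simple ordered semigroup, both carried through complete semilattice decompositions. These are the exact mirror images of the two-sided statements already used, so if they are not quoted directly from \cite{Cao 2000} they must be verified there or reproved; this is the step I expect to cost the most effort. The remaining point, already implicit in Theorem~\ref{1005}, is the purely bookkeeping verification that whenever $K_\alpha$ is an ideal of $S_\alpha$, every ordered inverse (computed in $S$ or in $S_\alpha$) of a power of an element of $K_\alpha$ already lies in $K_\alpha$, so that the right $\pi$-inverse property does pass between $K_\alpha$ and the surrounding semigroup.
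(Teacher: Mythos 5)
Your approach is the paper's own: the paper's entire proof of this corollary is the single line ``This follows from Theorem \ref{1005} and Corollary \ref{1114}'', and your streamlined derivation of $(i)\Leftrightarrow(iii)$ from Theorem \ref{1005}$(i)\Leftrightarrow(ii)$ applied componentwise together with Theorem \ref{ne511}(iii) is exactly what that citation amounts to, while your longer cycle $(i)\Rightarrow(ii)\Rightarrow(iii)\Rightarrow(i)$ is a faithful one-sided transcription of Corollary \ref{1114}. One step, used twice, would fail as written: from $a^m\in(K_\alpha a^{2m}]$ you conclude $a\in\Pi Intra_\leq(S)$ via the inclusion $(K_\alpha a^{2m}]\subseteq(Sa^{2m}S]$, but $(Sc]\not\subseteq(ScS]$ in general --- an element below $ua^{2m}$ need not lie below anything of the form $xa^{2m}y$, so left simplicity does not hand you intra-$\pi$-regularity in one line the way two-sided simplicity does in Corollary \ref{1114}. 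The repair is easy and stays inside left simplicity: since $(K_\alpha c]=K_\alpha$ for every $c\in K_\alpha$, choose $u,w\in K_\alpha$ with $a^m\leq ua^{2m}$ and $a^{2m}\leq wa^{3m}$, whence $a^m\leq uwa^{3m}=(uw)\,a^{2m}\,a^m\in(K_\alpha a^{2m}K_\alpha]\subseteq(Sa^{2m}S]$. With that patch everything goes through; note also that the one-sided analogues of Cao's Theorem~3.3 and Corollary~3.9 that you flag as the costly ingredient are not actually needed if you take the $(i)\Leftrightarrow(iii)$ route, because Theorem \ref{1005}$(i)\Leftrightarrow(ii)$ already performs the conversion between ``nil-extension of left simple'' and ``$l$-Archimedean'' on each component.
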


\begin{proof}
This follows from Theorem \ref{1005} and Corollary \ref{1114}.

\end{proof}

\bibliographystyle{plain}

\begin{thebibliography}{10}
\baselineskip 5mm

\bibitem{HB}
A. K. Bhuniya and K. Hansda, {Nil extensions of simple regular
ordered semigroups}, \emph{Asian-European Journal of Mathematics},
\textbf{14(3)} (2021), 2150044(15 pages). doi:
10.1142/S1793557121500443





\bibitem{Cao 2000}
Y. Cao and X. Xinzhai, {Nil extensions of simple po-semigroups},
\emph{Communication in Algebra}, \textbf{28(5)}(2000), 2477-2496.



\bibitem{HJ}
K. Hansda and A. Jamadar, {Characterization of inverse ordered
semigroups by their ordered idempotents and bi-ideals},
\emph{Quasigroups and Related Systems}, \textbf{28}(2020), 77-88.



\bibitem{HJ1}
K. Hansda and A. Jamadar, {Nil-extensions of simple and
$\pi$-inverse ordered semigroups}, \emph{Southeast Asian Bulletin of
Mathematics}, \textbf{48(2)}(2024), 227-235.



\bibitem{JH}
A. Jamadar and K. Hansda, { On Right inverse ordered semigroups},
\emph{Discussiones Mathematicae- General Algebra and Applications},
\textbf{43(1)}(2023), 75-83.



\bibitem{AJ1}
A. Jamadar, {$\pi$-inverse ordered semigroups}, \emph{Discussiones
Mathematicae- General Algebra and Applications},
\textbf{44(1)}(2024), 5-13.



\bibitem{AJ2}
A. Jamadar, {On right $\pi$-inverse ordered semigroups},
Communicated.


\bibitem{Ke2006}
N. Kehayopulu, {Ideals and Green's relations in ordered
semigroups}, \emph{International Journal of Mathematics and
Mathematical Sciences }, \textbf{}(2006), 1-8, Article ID 61286.

\bibitem{ke2003}
N. Kehayopulu and M. Tsingelis, Ideal extensions of ordered
semigroups, \emph{Comm. Algebra}, \textbf{31(10)}(2003),
4939-4969.




\bibitem{ke 2008}
N. Kehayopulu and M. Tsingelis,  {Semilattices of Archimedean
ordered semigroups}, \emph{Algera Colloquium} \textbf{15:3}(2008),
527-540.



\bibitem{SH1}
S. Sadhya and K. Hansda, { On Green's relations in  GV ordered
semigroups}, Communicated.




\end{thebibliography}

\end{document}